 \newcommand{\qbinom}{\genfrac{[}{]}{0pt}{}}
\newtheorem{theorem}{Theorem}
\newtheorem{proposition}{Proposition}
\newtheorem{lemma}{Lemma}
\newtheorem{corollary}{Corollary}
\newcommand\blfootnote[1]{%
  \begingroup
  \renewcommand\thefootnote{}\footnote{#1}%
  \addtocounter{footnote}{-1}%
  \endgroup
}
\begin{document}

\title{Total positivity and accurate computations related to $q$-Abel polynomials
\protect\thanks{This work was partially supported by Spanish research grants PID2022-138569NB-I00  (MCI/AEI)  and RED2022-134176-T (MCI/AEI) and by Gobierno de Arag\'{o}n (E41$\_$23R, S60$\_$23R).}

\blfootnote{\textit{This version of the article has been
accepted for publication, after peer review but is not the Version of Record and does not reflect post-acceptance improvements, or any corrections. The Version of Record is available online at: \href{http://dx.doi.org/10.1007/s10915-024-02699-8}{http://dx.doi.org/10.1007/s10915-024-02699-8}.
}}}



\author[1]{Y. Khiar}
\author[1]{E. Mainar}
\author[2,*]{E. Royo-Amondarain}
\author[1]{B. Rubio}

\affil[1]{Departamento de Matem\'{a}tica Aplicada/IUMA, Universidad de Zaragoza, Spain}
\affil[2]{Departamento de Matem\'{a}ticas / Centro de Astropartículas y Física de Altas Energías (CAPA), Universidad de Zaragoza, Spain}
\affil[*]{Corresponding author: eduroyo@unizar.es}



\date{}

\maketitle

\begin{abstract} 
The attainment  of accurate numerical solutions of ill-conditioned linear algebraic problems involving totally positive matrices has been gathering considerable attention among researchers over the last years. In parallel, the interest of $q$-calculus has been steadily growing in the literature. In this work  the $q$-analogue of the Abel polynomial basis is studied. The total positivity of  the matrix of change of basis between monomial and $q$-Abel bases is characterized, providing its bidiagonal factorization. Moreover, well-known high relative accuracy results of Vandermonde matrices corresponding to increasing positive nodes are extended to the decreasing negative case. This further allows to solve with high relative accuracy several algebraic problems concerning collocation, Wronskian and Gramian matrices of $q$-Abel polynomials. Finally, a series of numerical tests support the presented theoretical results and illustrate the goodness of the method where standard approaches fail to deliver accurate solutions.


\end{abstract}

\section{Introduction}
A sequence  $(p_0,p_1,\ldots)$ of polynomials such that $\deg(p_i)=i$, $i=0,1,\ldots$   is said to be of binomial type if it satisfies the following identities
 $$
 p_{n}(x+y)=\sum _{{k=0}}^{n}{n \choose k}\,p_{k}(x)\,p_{{n-k}}(y),\quad n\in\mathbb N.
 $$
 We can find a first example of this type of polynomials when considering the 
binomial   theorem 
  $(x+y)^n=\sum _{{k=0}}^{n}{n \choose k}\, x^k y^{n-k}$,  that shows that the sequence of monomials $(m_i)$ with $m_i(x):=x^i$ is of binomial type. 
 
 There are several generalizations of the binomial theorem. For the sequence $(x)_{n,h}$ of falling or rising factorials  defined by
$$
  (x)_{0,h}:=1,\quad (x)_{n,h}:=x(x-h)\cdots(x-(n-1)h),\\ 
$$
we also have 
 $$
 (x+y)_{n,h}=\sum _{{k=0}}^{n}{n \choose k}\,  (x)_{k,h} \,  (y)_{n-k,h},\quad n\in\mathbb N, 
 $$
which implies that the sequence  $(x)_{n,h}$ is also of binomial type. Moreover, it is well-known that the Touchard polynomials (also called the exponential polynomials): 
\begin{equation}\label{eq:Touchard}
   T_{0}(x):=1,\quad  T_{n}(x):=\sum _{k=0}^{n}S(n,k)x^{k},\quad n\in\mathbb N,
\end{equation}
where $S(n,k)$ are Stirling numbers of the second kind,  constitute the only polynomial sequence of binomial type with the coefficient of $x$ equal to 1 in every polynomial.

Using umbral calculus (cf. \cite{RST,RT}),    every polynomial sequence of binomial type may be obtained from  the sequence of Abel polynomials $(A_{n}^\alpha)$,  defined as
 \begin{equation}\label{eq:Abel}
   A_0^\alpha (x):=1,\quad   A_n^\alpha (x):=x(x-\alpha n)^{n-1},\quad  \alpha\in \mathbb R, \quad n\in\mathbb N \textcolor{red}{.}
\end{equation}
They are named after Niels Henrik Abel who proved the following equalities
$$
A_n^\alpha (x+\alpha)=\sum_{i=0}^n{n \choose i}A_i^\alpha (x)A_{n-i}^\alpha (\alpha),\quad  \alpha\in \mathbb R, \quad n\in\mathbb N.
$$

 During  last decades, $q$-calculus has been studied rigorously because of its implicit application in Mathematics, Mechanics and Physics \cite{Abe1997,Ernst2000Thesis,Jarad2013,Lavagno2000,Lavagno2002}. In fact, due to the intensive generalizations of the $q$-calculus and their interesting applications, $q$-analogue to  classical operators have emerged in recent years. Interested readers can find  in \cite{Ernst2013} a general introduction to the logarithmic $q$-analogue formulation of mathematical expressions with a special focus on its use for matrix calculations. In this sense, the $q$-analogue umbral method proposed in \cite{Ernst2006} is used to find $q$-analogues of significant matrices such as $q$-Cauchy matrices.
  
  There are also commutative $q$-binomial theorems, one of which is subsumed in a $q$-Abel binomial theorem for  the following $q$-analogue of the  Abel polynomial sequence  in \eqref{eq:Abel}:
\begin{equation}\label{eqBaseabel}
A_0^{q,\alpha}(x):=1,\quad A_n^{q,\alpha}(x):=x\prod_{j=1}^{n-1}(xq^j-\alpha [n]), \quad n\in\mathbb{N},
\end{equation} 
where $[n]$ is a $q$-integer.  Let us observe that  for $q=1$ the $q$-analogue of  Abel polynomials \eqref{eqBaseabel} coincide with the classical Abel polynomials  \eqref{eq:Abel}.
 
In the literature, the total positivity of some bases formed by polynomials of binomial type has been analyzed.  It is well-known that the monomial basis $(m_0,\ldots,m_n)$  of the space $P^n$ of polynomials of degree not greater than $n$ is strictly totally positive on $(0,\infty)$ (see Section 3 of \cite{Koev4}). On the other hand, the total positivity of Touchard polynomial bases $(T_0,\ldots,T_n)$ defined from \eqref{eq:Touchard}  has been recently proved in Section 6 of \cite{MPRNewton}, where  a procedure to get   computations to high relative accuracy  with their collocation and Wronskian matrices has also been provided.  

In a given floating point arithmetic, high relative accuracy implies that  the relative errors on the computations have the order of the machine precision and are not  drastically affected by  the dimension or  conditioning of the considered problem. 
So, the design of algorithms to high relative accuracy is an important trend in numerical linear algebra, which is attracting the interest of many researchers \cite{Koev1,Koev4,MPRRACSAM,MPRAdvances,MPRNA,MPRNewton,MPRNLA,MPRCalcolo,MPRJacobi,Marco3,MMP}.


In this paper the total positivity of polynomial bases  $(A_0^{q,\alpha},\ldots, A_n^{q,\alpha})$  formed by the $q$-analogue Abel polynomials in \eqref{eqBaseabel}  is going to be characterized in terms of the sign of $\alpha$. A bidiagonal factorization of the matrix of change of basis between monomial and $q$-analogue Abel polynomial bases will be derived. This factorization will be used to solve to high relative accuracy many algebraic problems with collocation, Wronskian and Gramian matrices of $q$-analogue Abel polynomial bases and so, for the well known Abel bases. 


 The layout of the paper is as follows. Section \ref{sec:notations} provides  notations and auxiliary results. Section \ref{sec:bda} focuses on the matrices of  change of basis between monomial and $q$-analogue Abel polynomial bases.    Their bidiagonal decomposition is provided and  their total positivity property is deduced. 
 In Section \ref{sec:tp}, well-known results on the strict total positivity of Vandermonde matrices  for increasing positive nodes are recovered for conveniently scaled Vandermonde matrices for  decreasing negative nodes. Then     
  $q$-analogue Abel  polynomials are considered  and the total positivity of  their collocation,  
   Wronskian and Gramian matrices  is analyzed.   Finally, Section \ref{sec:experimentos} presents numerical experiments confirming the accuracy of
the proposed methods for the computation of eigenvalues, singular values, inverses or the
solution of some linear systems related to collocation, Wronskian and Gram matrices of $q$-analogue Abel polynomial bases.

\section{Notations and auxiliary results}\label{sec:notations}
Let us recall that a matrix is \textit{totally positive}: TP (respectively, \textit{strictly totally positive}: STP) if all its minors are nonnegative (respectively, positive). Several 
 applications of these matrices can be found in \cite{Ando,Fallat,Pinkus}.

The \textit{Neville elimination} (NE) is an alternative procedure to Gaussian elimination  (see \cite{Gasca1,Gasca2,Gasca3}).
Given an $(n+1)\times (n+1)$, nonsingular matrix  $A=(a_{i,j})_{1\le i,j\le n+1}$, the NE process calculates  a sequence  of matrices
\begin{equation}\label{stepsNE}
A^{(1)} :=A\to   A^{(2)} \to \cdots \to A^{(n+1)},
\end{equation}
so that the entries of $A^{(k+1)}$ below the  main diagonal in the $k$ first columns,  $1\le k\le  n$, are zeros and so,     $A^{(n+1)} $  is   upper triangular.  In each step of the NE procedure, the matrix $A^{(k+1)}  =(a_{i,j}^{(k+1)})_{1\le i,j\le n+1}$   is computed from the matrix $A^{(k)}=(a_{i,j}^{(k)})_{1\le i,j\le n+1}$ as follows:
\begin{equation}\label{elementsNE}
 a_{i,j}^{(k+1)}:=\begin{cases}
 a^{(k)}_{i,j}, \quad  &\textrm{if} \ 1\le i \le k, \\
 a^{(k)}_{i,j}-  \frac{a_{i,k}^{(k)}}{ a_{i-1,k}^{(k)} } a^{(k)}_{i-1,j} , \quad &\textrm{if } k+1\le i,j\le n+1  \textrm{ and  }  a_{i-1,k}^{(k)}\ne 0,\\
 a^{(k)}_{i,j},  \quad &\textrm{if} \quad  k+1\le i\le n+1 \textrm{ and  }  a_{i-1,k}^{(k)}= 0.  
\end{cases}
\end{equation}

 The $(i,j)$ \textit{pivot}  of the NE process of the matrix $A$ is
the element $a_{i,j}^{(j)}$, for $1\le j\le i\le n+1$,
  and we say that $p_{i,i}$ is the $i$-th \textit{diagonal pivot}. Let us observe that 
whenever all  pivots  are nonzero, no  row exchanges are needed in the NE procedure. The $(i,j)$ \textit{multiplier} of the NE  process of the matrix $A$ is
\begin{equation}\label{multiEN} 
m_{i,j}:=\begin{cases}
 { a_{i,j}^{(j)} }/{ a_{i-1,j}^{(j)}},  & \textrm{if }\,  a_{i-1,j}^{(j)} \ne 0,\\
\strut 0, & \textrm{if }\,  a_{i-1,j}^{(j)} = 0,
\end{cases} ,\quad 1\le j < i\le n+1.
\end{equation}

Neville elimination is a nice tool to deduce that a given matrix is  STP, as shown in this  characterization  derived from Theorem 4.1, Corollary 5.5 of \cite{Gasca1} and the arguments of p. 116 of \cite{Gasca3}.

\begin{theorem} \label{thm1}
A given nonsingular matrix A is STP (resp., TP) if and only if the NE of $A$ and $A^T$ can be performed without row
exchanges, all the multipliers of the NE  of $A$ and $A^T$ are positive (resp., nonnegative), and  the diagonal pivots of the Neville
elimination of $A$ are all positive.
\end{theorem}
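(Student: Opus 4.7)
The plan is to prove both directions by relating the pivots and multipliers of Neville elimination to explicit ratios of minors of $A$, and then invoking the standard bidiagonal factorization that these quantities encode.

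For the forward implication, I would first establish the key algebraic identity that expresses every pivot $a_{i,j}^{(j)}$ and every multiplier $m_{i,j}$ as a quotient of two consecutive minors of $A$ formed by contiguous rows and columns (roughly, $a_{i,j}^{(j)}=\det A[i-j+1,\dots,i\,;\,1,\dots,j]/\det A[i-j+1,\dots,i-1\,;\,1,\dots,j-1]$, with an analogous formula for $m_{i,j}$). This identity is proved by induction on $j$ directly from the recurrence \eqref{elementsNE}, using elementary row-operation arguments on determinants. Granting STP of $A$, every such minor is strictly positive, so all pivots and multipliers are well defined and positive; since all denominators are nonzero no row exchanges can occur. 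Applying the same identity to $A^T$ yields positivity of the multipliers for the column-oriented NE. The TP case is handled by the same identities, now with nonstrict inequalities, with the only subtlety being that a vanishing denominator propagates into the convention of \eqref{multiEN} assigning $m_{i,j}:=0$.

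For the reverse implication, I would use the fact that the NE of $A$ performed without row exchanges is equivalent to writing
\[
A = F_n F_{n-1} \cdots F_1\, U,
\]
where each $F_k$ is a lower unit bidiagonal matrix whose subdiagonal entries are the multipliers of the $k$-th column of NE, and $U$ is upper triangular with the diagonal pivots on its diagonal. Applying the same reasoning to the NE of $A^T$ without row exchanges factors the upper-triangular part further into a product of upper bidiagonal matrices with nonnegative entries, yielding a complete bidiagonal factorization of $A$. Since each bidiagonal factor is TP (being a sum of a nonnegative diagonal and a nonnegative bidiagonal off-diagonal), and the product of TP matrices is TP by the Cauchy--Binet formula, $A$ is TP. For the strict case, positivity of all pivots and multipliers guarantees that each bidiagonal factor is in fact STP after suitable diagonal scaling, and Cauchy--Binet then upgrades the conclusion to STP of the product.

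The main technical obstacle will be the first step: rigorously establishing the minor-quotient representation of pivots and multipliers and verifying that the NE of $A^T$ provides exactly the missing upper bidiagonal factors (as opposed to merely yielding a transposed lower factorization). Once these identities are in hand, both directions reduce to bookkeeping together with the Cauchy--Binet argument, which is why the statement is most naturally regarded as a direct consequence of Theorem~4.1 and Corollary~5.5 of \cite{Gasca1} combined with the discussion on p.~116 of \cite{Gasca3}.
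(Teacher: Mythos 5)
Your overall architecture (forward direction via the minor-quotient formulas for the pivots and multipliers, reverse direction via the bidiagonal factorization \eqref{BDAfac} and Cauchy--Binet) is the standard one and is essentially the route of \cite{Gasca1,Gasca3}; note that the paper itself offers no proof and simply cites Theorem~4.1 and Corollary~5.5 of \cite{Gasca1} together with p.~116 of \cite{Gasca3}. The forward implication and the TP half of the reverse implication are correct in outline, though you understate the TP forward case: for a nonsingular TP (not STP) matrix it is already a theorem, not a formality, that NE proceeds without row exchanges, because one must show that whenever a denominator minor vanishes the relevant numerator minors vanish in the pattern \eqref{eq:CondU}; this is where the ``shadow'' lemmas of \cite{Gasca1} enter, and your one-clause remark about the convention in \eqref{multiEN} does not supply that argument.

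The genuine gap is in the STP half of the reverse implication. You assert that positivity of all pivots and multipliers makes ``each bidiagonal factor in fact STP after suitable diagonal scaling, and Cauchy--Binet then upgrades the conclusion to STP of the product.'' This is false: a lower (or upper) triangular bidiagonal matrix of order $n+1\ge 2$ is never STP under any diagonal scaling, since diagonal scaling preserves the zero pattern and, e.g., the $(1,2)$ entry of each $F_i$ --- a $1\times 1$ minor --- is zero. Consequently Cauchy--Binet only yields nonnegativity of the minors of the product; to obtain strict positivity one must exhibit, for every pair of index sets $(\alpha,\beta)$, a chain of intermediate index sets along which every factor contributes a positive minor. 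That combinatorial step is the substantive content of the theorem, and in \cite{Gasca1} it is handled through the criterion that a matrix is STP if and only if all its initial minors (minors with consecutive rows and consecutive columns touching the first row or first column) are positive, these initial minors being precisely products of the pivots and multipliers of the NE of $A$ and $A^T$. Without that step (or an equivalent planar-network argument) your reverse implication establishes only TP, not STP.
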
 

By Theorem 4.2 and the arguments of p.116  of \cite{Gasca3},   a nonsingular TP matrix $A=(a_{i,j})_{1\le i,j\le n+1}$
 admits a factorization of the form
\begin{equation}\label{BDAfac}
A=F_nF_{n-1}\cdots F_1 D G_1G_2 \cdots  G_n,
\end{equation}
 where $F_i$ and $G_i$, $i=1,\ldots,n$, are the TP, lower and upper triangular bidiagonal matrices given by 
\begin{equation}  
\label{fgi}
\footnotesize
F_i=
\begin{pmatrix}
1  \\
0 & 1  \\
  & \ddots & \ddots   \\
  & & 0 &  1   \\
  & & & m_{i+1,1} &  1  \\
  & & & & m_{i+2,2} &  1    \\
  & & & & & \ddots & \ddots     \\
  & & & & & & m_{n+1,n+1-i}  &  1
\end{pmatrix}, \quad
G_i^T=
\begin{pmatrix}
1  \\
0 & 1  \\
  & \ddots & \ddots   \\
  & & 0 &  1    \\
  & & & \widetilde{m}_{i+1,1} &  1   \\
  & & & & \widetilde{m}_{i+2,2} &  1   \\
  & & & & & \ddots & \ddots      \\
  & & & & & & \widetilde{m}_{n+1,n+1-i} & 1
\end{pmatrix},
\end{equation}
and $D={\rm diag}\left (p_{1,1}, p_{2,2},\ldots, p_{n+1,n+1}\right)$ has positive diagonal entries. The diagonal entries $p_{i,i}$ of $D$ are the positive diagonal pivots of the NE of $A$ and the nonnegative elements $m_{i,j}$ and $\widetilde{m}_{i,j}$ are  the multipliers of the NE of  $A$ and $A^T$, respectively.  
 If, in addition, the entries  $m_{ij}$, $\widetilde{m}_{ij}$ satisfy
\begin{equation}\label{eq:CondU}
m_{ij}=0 \quad \Rightarrow \quad  m_{hj}=0, \;\forall \, h>i  \quad \textrm{and}\quad 
  \widetilde{m}_{ij}=0 \quad \Rightarrow  \quad  \widetilde{m}_{ik}=0, \; \forall \, k>j,
\end{equation}
 then the decomposition (\ref{BDAfac}) is unique.

In \cite{Koev4}, the bidiagonal factorization \eqref{BDAfac} of an $(n+1)\times (n+1)$ nonsingular and TP matrix $A$ is represented by defining a matrix $BD(A)=(  BD(A)_{i,j})_{1\le i,j\le n+1} $ such that 
\begin{equation}\label{eq:BDA}
 BD(A)_{i,j}:=\begin{cases} m_{i,j}, & \textrm{if } i>j, \\  p_{i,i}, & \textrm{if } i=j, \\  \widetilde{m}_{ j ,i}, & \textrm{if } i<j. \\     \end{cases}
\end{equation}

The transpose of $A$  is also totally positive and, using the factorization \eqref{BDAfac}, can be written as follows   
\begin{equation}\label{eq:BDAtranspose}
A^{T}={G}_n^{T}{G}_{n-1}^{T}\cdots {G}_1^{T} D {F}_1^{T}  {F}_2^{T}\cdots {F}_n^T.
\end{equation}

Let us also recall that  a real  value $t$ is computed with high relative accuracy (HRA) whenever the computed value $\tilde t$  satisfies
$$ 
\frac{ | t-\tilde t |   }{  | t| } < Ku,
$$
where $u$ is the unit round-off and $K>0$ is a  constant, which is independent of the arithmetic precision. Clearly, HRA implies a great accuracy since  the relative errors in the computations have the same order as the machine precision. A  sufficient condition to assure that an algorithm can be computed to HRA  is the no inaccurate cancellation  condition, sometimes denoted as NIC condition, 
which is satisfied if  the algorithm only evaluates products, quotients,   sums of numbers of the same sign, subtractions of numbers of opposite sign or subtraction of initial data    (cf. \cite{Koev1,Koev4}). 
 
If  the bidiagonal factorization \eqref{BDAfac} of a nonsingular and TP matrix $A$ can be computed to HRA, the computation of   its eigenvalues and singular values, the computation of   $A^{-1}$ and even the resolution of systems of linear equations  $Ax=b$, for vectors $b$ with alternating signs, can be also computed to HRA using the algorithms provided in \cite{Koev3}. 
 
Let  $(u_0, \ldots, u_n)$ be  a basis  of a space $U$ of  functions defined on $I\subseteq R$.
Given  a sequence of parameters
  $ t_1<\cdots<t_{n+1} $  on $I$,  the corresponding \textit{collocation matrix}  is defined by
\begin{equation} \label{eq:MC}
M(t_1,\ldots , t_{n+1}):=\big(  u_{j-1}(t_i) \big)_{1\le i,j\le n+1}.
\end{equation}
Many fundamental problems in interpolation and approximation require linear algebra computations related to collocation  matrices. In fact, they appear when imposing interpolation conditions for a given basis.

Related to the Hermite interpolation problem, if the space $U$ is formed by   $n$-times continuously differentiable functions  and $t\in I$, the \textit{Wronskian matrix} at  $t$  is defined by:
\begin{equation} \label{eq:MW}
W(u_0, \ldots, u_n)(t) :=(u_{j-1}^{(i-1)}(t))_{1\le i,j\le n+1},
\end{equation}  
where $u^{(i)}(t)$, $i\le n$,  denotes the $i$-th derivative  of $u$ at $t$. As usual, we shall use $u'(t)$ to denote  the first derivative of $u$ at $t$.

Now, let us suppose that $U$ is a Hilbert space of functions  on the interval $[0,T]$, $T\le +\infty$, under a given inner product
$$
\langle u,v\rangle := \int_{0}^T u(t)v(t)\, dt,
$$
defined for any  $u,v\in U$. Then, given linearly independent  functions $v_0,\ldots, v_n$  in $U$, the corresponding \textit{Gram or Gramian matrix}   is the symmetric matrix  described by:
$$
G(v_0,\ldots,v_n):=\left( \langle v_{i-1},v_{j-1}\rangle   \right)_{1\le i,j\le n+1}.
$$

\section{Factorization of matrix conversion between $q$-Abel and monomial bases}\label{sec:bda}
Let  ${\mathbf P}^n(I)$ be the  (n+1)-dimensional vector space formed by all polynomials  in the variable $x$ defined on a real interval $I$ and whose degree is not greater than $n$, that is,
\[
 {\mathbf P}^n(I):=\textrm{span}\{1, x, \ldots, x^n\},\quad   x\in I.
 \]
Then, let us recall definition \eqref{eqBaseabel}, for $m \in \mathbb{N}\cup\{0\}$ and $q>0$ any positive real number, the $m$-th degree $q$-Abel polynomial (see Section 3 of \cite{Johnson}) is given by 
\begin{equation*}
A_0^{(q,\alpha)}(x):=1,\quad A_m^{(q,\alpha)}(x):=x\prod_{j=1}^{m-1}(xq^j-\alpha [m]), \quad \alpha\in \mathbb R,\quad m\in\mathbb{N},
\end{equation*}
where the $q$-integer $[m]$ is defined as
\begin{equation}\label{eq:prop1}
[m]:=\sum_{i=0}^{m-1}q^{i}=
    \begin{cases}
    \frac{1-q^m}{1-q}, & q\neq 1,\\
    m,&q=1.
\end{cases}
\end{equation}
Observe that, for the particular case $q=1$, the polynomials given by \eqref{eqBaseabel} are the well-known Abel polynomials in $\eqref{eq:Abel}$ (cf. \cite{RST,RT}).

Note that, in analogy to the integer case, it is possible to define the $q$-factorial as
\begin{equation*}
[m]! :=[m][m-1]\cdots[1],
\end{equation*}  and, in the same way, to introduce a $q$-binomial coefficient, given by
\begin{equation*}
\qbinom{m}{k} := \frac{[m]!}{[k]![m-k]!}.
\end{equation*}
In addition, let us introduce some useful properties that are verified by $q$-integers. First, the subtraction of two $q$-integers satisfies
\begin{equation}\label{eq:prop2}
[t]-[s]=q^s[t-s].
\end{equation}
Moreover, $q$-binomial coefficients, which are extensions of the integer case, maintain properties such as
\begin{equation}\label{eq:prop4}
[k]{m \brack k}=[m]{m-1 \brack k-1},
\end{equation}
and the following equivalents to Pascal's identity,
\begin{equation}\label{eq:prop3}
{m \brack k}={m-1 \brack k-1}+q^k{m-1 \brack k},\quad {m \brack k}=q^{m-k}{m-1 \brack k-1}+{m-1 \brack k}.
\end{equation}

Using formula $(1.8)$ of \cite{Johnson}, $q$-Abel polynomials \eqref{eqBaseabel} can also be written in terms of the monomial basis as follows,
\begin{align*}
    A_m^{(q,\alpha)}(x)&=x\prod_{j=1}^{m-1}(xq^j-\alpha [m])= q^{m-1}x\prod_{j=0}^{m-2}(xq^j-\alpha [m]) \\
    &= q^{m-1}\sum_{k=0}^{m-1}{ m-1\brack k}q^{\binom{k}{2}}x^{k+1}\left(-\frac{[m]\alpha}{q}\right)^{m-1-k} \\
    &=\sum_{k=0}^{m-1}{ m-1\brack k}q^{\binom{k+1}{2}}x^{k+1}\left(-[m]\alpha\right)^{m-1-k}\\\ &= \sum_{k=1}^{m}(-1)^{m-k}{ m-1\brack m-k}q^{\binom{k}{2}}[m]^{m-k}\alpha^{m-k}x^{k},\quad m\in\mathbb N.
\end{align*}
Then, we can write
\begin{equation}\label{eq:L}
 (A_0^{(q,\alpha)}(x),   \ldots, A_n^{(q,\alpha)}(x))^T=  L^{(q,\alpha)} (1,  \ldots,x^{n})^T,
\end{equation}
where the matrix of change of basis is a nonsingular triangular matrix $L^{(q,\alpha)}=( {l}_{i,j})_{1\le i,j\le n+1}$ with
\begin{equation}\label{eq:qlij}
l_{1,1}=1, \quad l_{i,1}=0, \quad i\geq2, \quad l_{i,j}=(-\alpha[i-1])^{i-j}q^{\binom{j-1}{2}}{i-2 \brack i-j},\quad 2\leq j\leq i\leq n+1
\end{equation}
and the binomial coefficient $\binom{n}{k}$ vanishes for any $n<k$. Then, $L^{(q,\alpha)}$ can be written as
\[
\setlength\arraycolsep{6pt}
L=
\begin{pmatrix}
1 & 0 & 0 & 0 &\cdots & 0 \\
0 & 1 & 0 &0 & \cdots & 0 \\
0 & -\alpha[2] & q &0 & \cdots & 0 \\
0 & (-\alpha[3])^{2} & -\alpha q[3][2] & q^3 &\cdots & 0 \\
\vdots &  \vdots  &\vdots   & \vdots &\ddots & 0  \\   
0 & (-\alpha[n])^{n-1} & (-\alpha[n])^{n-2}q[n-1] & (-\alpha[n])^{n-3}q^3\qbinom{n-1}{n-3} & \cdots & q^{\binom{n}{2}} \\
\end{pmatrix}.
\]

At this point, our objective is to obtain the expression of the pivots and multipliers of the NE of  the matrix $L^{(q,\alpha)}$ and, consequently, to deduce its  bidiagonal factorization~\eqref{BDAfac}, analyzing its total positivity property and the computation to HRA of its bidiagonal decomposition. To this aim, it is convenient to introduce an auxiliary result involving the $q$-binomial coefficients.
\begin{lemma} \label{lemma:Aq}
Let $i,k\in\mathbb{N}$ such that $i\geq k$. Then,
   \begin{equation}\label{eq:auxAq}
S_{k,i}:=  \sum_{l=0}^{k} (-1)^l q^{\binom{l+1}{2}} \qbinom{k}{l} \qbinom{i-l}{k} = 1.
   \end{equation}
\end{lemma}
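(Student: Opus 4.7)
The plan is to prove the identity by induction on $k$, using the two Pascal-type identities for $q$-binomial coefficients listed in \eqref{eq:prop3}. The base case $k=0$ is immediate: the sum has a single nonzero term equal to $\qbinom{i}{0}=1$.

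For the inductive step, the target is the recursion $S_{k,i}=S_{k-1,i-1}$. I would first apply the second identity in \eqref{eq:prop3}, namely $\qbinom{k}{l}=q^{k-l}\qbinom{k-1}{l-1}+\qbinom{k-1}{l}$, to split $S_{k,i}$ into two sums. In the sum containing $\qbinom{k-1}{l-1}$ I would shift the index $l\mapsto l+1$. The crucial arithmetic identity to verify during this shift is
\[
\binom{l+2}{2}+(k-l-1)=\binom{l+1}{2}+k,
\]
which lets the $q$-exponent be rewritten as $q^{\binom{l+1}{2}}$ times a single global factor $q^k$. Combining the two sums then yields
\[
S_{k,i}=\sum_{l=0}^{k-1}(-1)^l q^{\binom{l+1}{2}}\qbinom{k-1}{l}\Bigl[\qbinom{i-l}{k}-q^k\qbinom{i-l-1}{k}\Bigr],
\]
and the bracket is precisely $\qbinom{i-l-1}{k-1}$ by the first Pascal identity in \eqref{eq:prop3} applied to $\qbinom{i-l}{k}$. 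This establishes $S_{k,i}=S_{k-1,i-1}$, and iterating $k$ times reduces the claim to $S_{0,i-k}=\qbinom{i-k}{0}=1$.

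The main obstacle is purely bookkeeping: one must choose the right Pascal identity for each of the two factors $\qbinom{k}{l}$ and $\qbinom{i-l}{k}$ so that the $q$-exponents balance after the index shift. Once the correct pairing is identified (the second identity in \eqref{eq:prop3} on $\qbinom{k}{l}$, the first on $\qbinom{i-l}{k}$), the manipulation collapses without further input. A more conceptual alternative would use generating functions: combining the $q$-geometric series $\sum_{i\geq k}\qbinom{i}{k}t^i=t^k\big/\prod_{j=0}^{k}(1-tq^j)$ with the $q$-binomial theorem $\sum_{l=0}^{k}(-1)^l q^{\binom{l+1}{2}}\qbinom{k}{l}t^l=\prod_{j=1}^{k}(1-tq^j)$ gives $\sum_{i\geq k}S_{k,i}\,t^i=t^k/(1-t)$, from which $S_{k,i}=1$ can be read off directly. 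Either route suffices, but the inductive argument above stays closest to the elementary tools already catalogued in Section~\ref{sec:bda}.
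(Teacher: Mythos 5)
Your proof is correct, and although it shares the paper's overall strategy of induction on $k$ via the $q$-Pascal identities, the inductive step is genuinely different and noticeably cleaner. The paper keeps $i$ fixed and reduces $S_{k+1,i}$ through a fairly heavy computation with $q$-integers, using $q^l[i-l-k]=[i-k]-[l]$, the absorption identity \eqref{eq:prop4} and \eqref{eq:prop2} on top of \eqref{eq:prop3}, and finally lands on $S_{k+1,i}=\frac{[i-k]}{[k+1]}+\frac{[k+1]-[i-k]}{[k+1]}\,S_{k,i-1}=1$, so the induction hypothesis enters through a weighted combination whose coefficients must be seen to sum to one. You instead obtain the exact telescoping recursion $S_{k,i}=S_{k-1,i-1}$ using nothing but the two Pascal identities in \eqref{eq:prop3}: I verified the exponent bookkeeping $\binom{l+2}{2}+(k-l-1)=\binom{l+1}{2}+k$ after the shift $l\mapsto l+1$, the vanishing of the boundary terms $\qbinom{k-1}{-1}$ and $\qbinom{k-1}{k}$, and the collapse of the bracket $\qbinom{i-l}{k}-q^{k}\qbinom{i-l-1}{k}=\qbinom{i-l-1}{k-1}$; all are correct, and iterating down to $S_{0,i-k}=1$ uses $i\geq k$ exactly where it is needed. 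Your route buys a shorter argument that avoids all $q$-integer arithmetic and makes visible that $S_{k,i}$ depends only on $i-k$; the paper's version, while longer, stays entirely inside the toolbox it has already set up for the subsequent Neville-elimination computation, where the same $q$-integer manipulations reappear. The generating-function alternative you sketch is also valid (the product $\prod_{j=1}^{k}(1-tq^j)\cdot t^k/\prod_{j=0}^{k}(1-tq^j)=t^k/(1-t)$ checks out) and is arguably the most conceptual explanation of why the answer is identically $1$, though it imports the $q$-binomial theorem, which the paper does not otherwise invoke.
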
  

\begin{proof} 
First, it is simple to verify that for $k=0$,
\begin{equation*}
S_{0,i}=\qbinom{i}{0}=1,  \quad i\geq k.
\end{equation*}
Now, by induction on $k$, we assume that $S_{k,i}=1$ and write $S_{k+1,i}$ as
\begin{equation*}
   S_{k+1,i}=\sum_{l=0}^{k+1} (-1)^l q^{\binom{l+1}{2}} \qbinom{k+1}{l} \qbinom{i-l}{k+1}, \quad  i\geq k+1.
\end{equation*}
Using the analogue of Pascal's identity \eqref{eq:prop3} on the first $q$-combinatorial number and shifting one of the factors in the second one, we have
\begin{equation*}
   S_{k+1,i}=\sum_{l=0}^{k+1} (-1)^l q^{\binom{l+1}{2}} \left( \qbinom{k}{l-1} + q^l \qbinom{k}{l}\right) \qbinom{i-l}{k}
   \frac{[i-l-k]}{[k+1]},
\end{equation*}
and, since $q^l[i-l-k]=[i-k]-[l]$, 
\begin{align*}
   S_{k+1,i}&=\frac{1}{[k+1]}\sum_{l=0}^{k+1} (-1)^l q^{\binom{l+1}{2}} \left( \qbinom{k}{l-1}[i-l-k] + \qbinom{k}{l}[i-k] - \qbinom{k}{l}[l] \right) \qbinom{i-l}{k} \\
     &=\frac{[i-k]}{[k+1]}+\frac{1}{[k+1]}\sum_{l=1}^{k+1} (-1)^l q^{\binom{l+1}{2}} \qbinom{k}{l-1} \qbinom{i-l}{k} \left( [i-l-k] - [k-l+1] \right),
\end{align*}
where in the last step the second sum is one by hypothesis and in the third one the bottom factor of the $q$-combinatorial number is shifted. Then, applying \eqref{eq:prop2} and shifting the index $l\to l-1$, it follows that
\begin{align*}
   S_{k+1,i}&=\frac{[i-k]}{[k+1]}+\frac{1}{[k+1]}\sum_{l=1}^{k+1} (-1)^l q^{\binom{l}{2}} \qbinom{k}{l-1} \qbinom{i-l}{k}q^{k+1}[i-2k+1] \nonumber\\
   &=\frac{[i-k]}{[k+1]}+\frac{[k+1]-[i-k]}{[k+1]}\sum_{l=0}^{k} (-1)^{l} q^{\binom{l+1}{2}} \qbinom{k}{l} \qbinom{i-l-1}{k}=1, \quad   i\geq k+1,
\end{align*}
where in the last step the sum is $1$ by hypothesis and so, \eqref{eq:auxAq} holds for $k+1$. 
\qed
\end{proof}

\begin{theorem} \label{thm:BDALf}
Let $L^{(q,\alpha)}\in {\mathbb R}^{ (n+1)\times (n+1)}$ be the matrix defined in \eqref{eq:qlij}. Then, 
\begin{equation}\label{eq:BDAbelq}
L^{(q,\alpha)} =  F_n \ldots F_{1}D, 
\end{equation}
where  ${F}_i$, $i=1,\ldots,n$, are the lower triangular bidiagonal matrices whose structure is described by \eqref{fgi} and their off-diagonal entries are 
\begin{equation}\label{eq:mij}
m_{i,1}:=0 \quad 2\le i\le n+1, \quad m_{i,j}:=-q^{j-2}\left(\frac{[i-1]}{[i-2]}\right)^{i-j}[i-j]\alpha, \quad 2\le j< i \le n+1, 
\end{equation}
and $D \in {\mathbb R}^{ (n+1)\times (n+1)}$ is the diagonal matrix $D={\rm diag}\left (p_{1,1}, p_{2,2},\ldots, p_{n+1,n+1}\right)$ with
\begin{equation}\label{eq:pii}
 p_{i,i}:=q^{\binom{i-1}{2}}, \quad 1\le i\le n+1. 
\end{equation}
\end{theorem}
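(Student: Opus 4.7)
The plan is to perform Neville elimination directly on $L^{(q,\alpha)}$ and match the resulting multipliers and pivots with \eqref{eq:mij} and \eqref{eq:pii}. Since \eqref{eq:CondU} holds trivially (the only vanishing multipliers lie in the degenerate first column or occur globally when $\alpha=0$), the bidiagonal decomposition produced by NE is unique, so identifying the NE data is enough to establish \eqref{eq:BDAbelq}.

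First I would dispatch the easy data. Because $l_{1,1}=1$ and $l_{i,1}=0$ for $i\ge 2$, NE step $k=1$ is trivial: $A^{(2)}=A^{(1)}=L^{(q,\alpha)}$ and $m_{i,1}=0$, matching \eqref{eq:mij}. Moreover, since $L^{(q,\alpha)}$ is lower triangular we have $l_{r,i}=0$ for every $r<i$; an immediate induction on $k$ using \eqref{elementsNE} shows that this strict-upper-triangular zero pattern is preserved. Consequently, each diagonal entry is never altered by the algorithm, so
$$p_{i,i}=a_{i,i}^{(i)}=l_{i,i}=q^{\binom{i-1}{2}},$$
which is \eqref{eq:pii}. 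The same observation implies that the NE of $(L^{(q,\alpha)})^{T}$ has all zero multipliers, which is why the factorization \eqref{eq:BDAbelq} contains no upper bidiagonal factors $G_{i}$.

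The substantive part is the computation of the subdiagonal multipliers for $k\ge 2$. I would propose a closed-form ansatz for the active block of $A^{(k)}$, of the shape
$$a_{i,j}^{(k)}=(-\alpha)^{i-j}\,q^{\binom{j-1}{2}}\,[i-1]^{i-j}\,\Phi_{i,j,k},$$
for $k\le j\le i\le n+1$, where $\Phi_{i,j,k}$ is a rational function of $q$-integers indexed by the range $[k-1],\ldots,[i-2]$ and a ratio of $q$-binomial coefficients. I would then verify the ansatz by induction on $k$: substituting into the recurrence \eqref{elementsNE} and factoring out the common powers of $-\alpha$, $q$ and $[i-1]$ reduces the inductive step to a $q$-combinatorial identity which is precisely Lemma~\ref{lemma:Aq} (or an immediate specialization obtained by one application of the $q$-Pascal relation \eqref{eq:prop3}). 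Once the ansatz is confirmed, the formula for the multiplier follows by a telescoping computation of
$$m_{i,k}=\frac{a_{i,k}^{(k)}}{a_{i-1,k}^{(k)}},$$
in which the powers $[r-1]^{r-k}/[r-2]^{r-k}$ collapse to $([i-1]/[i-2])^{i-k}$ and the factor $[i-k]$ appears from the $q$-integer difference \eqref{eq:prop2}; the accumulated $q$-powers yield the prefactor $-q^{k-2}$, matching \eqref{eq:mij} exactly.

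The main obstacle is the choice of the ansatz $\Phi_{i,j,k}$: it must be rigid enough that the NE recurrence closes via Lemma~\ref{lemma:Aq}, yet flexible enough to specialize to the known entries at $k=1$ and at the boundary $j=k$. Once the correct ansatz is in hand, the rest is bookkeeping of $q$-powers and $q$-binomial manipulations, and assembling the identified multipliers and pivots into \eqref{fgi} yields \eqref{eq:BDAbelq} directly from the general bidiagonal decomposition \eqref{BDAfac}.
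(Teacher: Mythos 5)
Your overall strategy coincides with the paper's: perform Neville elimination on $L^{(q,\alpha)}$, identify the pivots and multipliers, and assemble them into \eqref{BDAfac}. Your treatment of the peripheral data is correct, and in one place cleaner than the paper's: since the strictly upper triangular zero pattern is preserved by \eqref{elementsNE}, the diagonal entries are never modified, so $p_{i,i}=l_{i,i}=q^{\binom{i-1}{2}}$ follows at once (the paper instead evaluates its closed form for $l_{i,i}^{(i)}$ and observes that only the $l=0$ summand survives); likewise your remark that the upper factors $G_i$ are absent because $(L^{(q,\alpha)})^{T}$ is upper triangular is correct, as is the vanishing of the first-column multipliers.

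There is, however, a genuine gap at the heart of the argument: you never specify the ansatz $\Phi_{i,j,k}$, and you acknowledge this yourself. Essentially all of the substantive work lives there. The paper's induction hinges on the explicit closed form
$$
l_{i,j}^{(k)}=(-\alpha)^{i-j}q^{\binom{j-1}{2}}[i-1]^{i-k+1}\sum_{l=0}^{k-2}(-1)^l q^{\binom{l}{2}}\qbinom{k-2}{l}\qbinom{i-2-l}{i-j-l}[i-1-l]^{k-j-1},
$$
where the exponent of $[i-1]$ depends on $k$ rather than on $j$, so the prefactor you fixed does not yet isolate the combinatorial core. Moreover, the evaluation of $m_{i,k}=l_{i,k}^{(k)}/l_{i-1,k}^{(k)}$ is not a mere telescoping of powers: one must split both numerator and denominator into two sums using $[i-1]=q^{j}[i-1-j]+[j]$ and $q^{k-1}[i-k]=[i-1]-[k-1]$ together with the $q$-Pascal identities \eqref{eq:prop3}, observe that the second sums agree exactly, and invoke Lemma~\ref{lemma:Aq} to show that the remaining two sums are each equal to $1$. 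A separate and equally nontrivial manipulation is required to show that the recurrence \eqref{eq:lijk_plus1} reproduces the same closed form with $k$ replaced by $k+1$. Without exhibiting the closed form and carrying out these two verifications, the multiplier formula \eqref{eq:mij} is asserted rather than proved.
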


\begin{proof} Let   $L^{(k)}=(l_{i,j}^{(k)})_{1\le i,j\le n+1}$ be the matrices obtained after $k$ steps of the NE process of $L^{(1)}:=L^{(q,\alpha)}$.
To begin with, by \eqref{multiEN}, the multipliers $m_{i,1}=0$ since $l_{i,1}=0$ for $2\le i\le n+1$ by definition \eqref{eq:qlij}.
Then, let us show by induction on $k$ that 
\begin{equation}\label{eq:lqkij}
l_{i,j}^{(k)}=(-\alpha)^{i-j}q^{\binom{j-1}{2}}[i-1]^{i-k+1}\sum_{l=0}^{k-2}(-1)^l q^{\binom{l}{2}}{k-2\brack l} {i-2-l\brack i-j-l}[i-1-l]^{k-j-1},
\end{equation}
with $2\leq k\leq j\leq i\leq n+1$.
For $k=2$, using the first step of NE \eqref{elementsNE} and \eqref{eq:qlij}, we have
$$
l_{i,j}^{(2)}=l_{i,j}=(-1)^{i-j}q^{\binom{j-1}{2}}[i-1]^{i-j}{i-2 \brack i-j}\alpha^{i-j},\quad 2\leq j<i\leq n+1.
$$
Assuming that claim \eqref{eq:lqkij} holds for $k$, let us obtain
\begin{equation}\label{eq:lijk_plus1}
l_{i,j}^{(k+1)}= l^{(k)}_{i,j}-  \frac{l_{i,k}^{(k)}}{ l_{i-1,k}^{(k)} } l^{(k)}_{i-1,j}.
\end{equation}
First, we compute the quotient in the previous expression. Applying the induction hypothesis \eqref{eq:lqkij}, we have:
\begin{equation}\label{eq:mik-qproof}
\frac{l_{i,k}^{(k)}}{ l_{i-1,k}^{(k)} }=
-\alpha\left(\frac{[i-1]}{[i-2]}\right)^{i-k}[i-k]q^{k-2}\frac{[i-1]P_{i,k}}{q^{k-2}[i-k]P_{i-1,k}},
\end{equation}
where $P_{i,k}$ is given by
\begin{equation*}
P_{i,k}:=\sum_{j=0}^{k-2}(-1)^j q^{\binom{j}{2}} \qbinom{k-2}{j} \qbinom{i-2-j}{k-2} \left[i-j-1\right]^{-1}.
\end{equation*}

We intend now to show that the quotient in \eqref{eq:mik-qproof} is $1$, for $2\leq k\leq j\leq i\leq n+1$. Starting by the numerator, using $[i-1]=q^j[i-1-j]+[j]$ it follows that
\begin{align}
[i-1]P_{i,k}&=\sum_{j=0}^{k-2}(-1)^j q^{\binom{j+1}{2}} \qbinom{k-2}{j} \qbinom{i-2-j}{k-2} \nonumber\\ &+ \sum_{j=0}^{k-2}(-1)^j q^{\binom{j}{2}} \qbinom{k-2}{j} \qbinom{i-2-j}{k-2}\frac{[j]}{[i-j-1]}.\label{eq:qdemnum}
\end{align}
In a similar fashion, $q^{k-1}[i-k]=[i-1]-[k-1]$, and the denominator can be expressed as
\begin{align}
&q^{k-2}[i-k]P_{i-1,k}= \nonumber\\ 
&=\sum_{j=0}^{k-2}(-1)^j q^{\binom{j}{2}-1}\left(\qbinom{k-2}{j}\frac{[i-1]}{[i-2-j]}-\qbinom{k-1}{j+1}\frac{[j+1]}{[i-2-j]} \right) \qbinom{i-3-j}{k-2} \nonumber\\
&=\sum_{j=0}^{k-2}(-1)^j q^{\binom{j}{2}-1}\left(\qbinom{k-2}{j}\frac{[i-1]-[j+1]}{[i-2-j]}-\qbinom{k-2}{j+1}\frac{q^{j+1}[j+1]}{[i-2-j]} \right) \qbinom{i-3-j}{k-2} \nonumber\\
&=\sum_{j=0}^{k-2}(-1)^j q^{\binom{j+1}{2}}\qbinom{k-2}{j}\qbinom{i-3-j}{k-2} + \sum_{j=0}^{k-2}(-1)^j q^{\binom{j}{2}} \qbinom{k-2}{j} \qbinom{i-2-j}{k-2} \frac{[j]}{[i-1-j]}, \label{eq:qdemden}
\end{align}
where in the second step \eqref{eq:prop3} has been used, followed in the last step by a shift of the summation index in the second summand. Finally, note that both numerator \eqref{eq:qdemnum} and denominator \eqref{eq:qdemden} share exactly its second sum, whereas the first one is equal by Lemma \ref{lemma:Aq}, as long as $i>k$. As a consequence,
\begin{equation}\label{eq:quotient_multipliers}
\frac{l_{i,k}^{(k)}}{ l_{i-1,k}^{(k)} }=-\alpha\left(\frac{[i-1]}{[i-2]}\right)^{i-k}[i-k]q^{k-2}.
\end{equation}

Now, we proceed with the computation of $l_{i,j}^{(k+1)}$ in \eqref{eq:lijk_plus1}.
Assuming that \eqref{eq:lqkij} holds for $k$ we have
\begin{align*}
l_{i,j}^{(k+1)}= (-\alpha)^{i-j}q^{\binom{j-1}{2}}[i-1]^{i-k}
\left(
[i-1]\sum_{l=0}^{k-2}(-1)^l q^{\binom{l}{2}}{k-2\brack l} {i-2-l\brack j-2}[i-1-l]^{k-j-1}  \right. \\
\left. + q^{k-2}[i-k]\sum_{l=1}^{k-1}(-1)^l q^{\binom{l-1}{2}}{k-2\brack l-1} {i-2-l\brack j-2}[i-1-l]^{k-j-1}
\right),
\end{align*}
where in the second sum inside the parentheses we have shifted the index $l\to l+1$. Then, extending both sums from $0$ to $k-1$ and using $q^{k-1}[i-k]=[i-1]-[k-1]$, we arrive to
\begin{align*}
l_{i,j}^{(k+1)}&= (-\alpha)^{i-j}q^{\binom{j-1}{2}}[i-1]^{i-k} \times \\  &\left(
[i-1]\sum_{l=0}^{k-1}(-1)^l q^{\binom{l-1}{2}-1}\left(q^l{k-2\brack l}+{k-2\brack l-1} \right){i-2-l\brack j-2}[i-1-l]^{k-j-1}  \right. \\
&\left. - [k-1]\sum_{l=0}^{k-1}(-1)^l q^{\binom{l-1}{2}-1}{k-2\brack l-1} {i-2-l\brack j-2}[i-1-l]^{k-j-1}
\right) \\
&=(-\alpha)^{i-j}q^{\binom{j-1}{2}}[i-1]^{i-k}  \times \\ 
&\left(
[i-1]\sum_{l=0}^{k-1}(-1)^l q^{\binom{l-1}{2}-1}{k-1\brack l}{i-2-l\brack j-2}[i-1-l]^{k-j-1} \right. \\
&\left. - \sum_{l=0}^{k-1}(-1)^l q^{\binom{l-1}{2}-1}{k-1\brack l} {i-2-l\brack j-2}[l][i-1-l]^{k-j-1}
\right),
\end{align*}
where properties \eqref{eq:prop4} and \eqref{eq:prop3} have been used in the second step. Then, regrouping terms and using $[i-1]-[l]=q^l[i-1-l]$, the result is
\begin{align*}
l_{i,j}^{(k+1)}&=(-\alpha)^{i-j}q^{\binom{j-1}{2}}[i-1]^{i-k}
\sum_{l=0}^{k-1}(-1)^l q^{\binom{l}{2}}{k-1\brack l}{i-2-l\brack j-2}[i-1-l]^{k-j},
\end{align*}
and, as a consequence, \eqref{eq:lqkij} is verified for $k+1$.

Finally, by \eqref{eq:qlij}, the first pivot $p_{1,1}=l_{1,1}=1$ and with the expression of $l_{i,j}^{(k)}$ we can compute the rest of the pivots $p_{i,i}$ as
\begin{equation*}
l_{i,i}^{(i)}=(-\alpha)^{i-i}q^{\binom{i-1}{2}}[i-1]
\sum_{l=0}^{i-2}(-1)^l q^{\binom{l}{2}}{i-2\brack l}{i-2-l\brack i-2}[i-1-l]^{-1}=q^{\binom{i-1}{2}},
\end{equation*}
with $2\leq i\le n+1$,
since only the first summand corresponding to $l=0$ is non-zero. With respect to the multipliers $m_{i,j}$, by \eqref{eq:quotient_multipliers} and \eqref{multiEN} they verify the expression given in \eqref{eq:mij}.
\qed
\end{proof}

The provided bidiagonal decomposition~\eqref{eq:BDAbelq} of $L^{(q,\alpha)}$  can be stored in a  compact form through $BD(L^{(q,\alpha)})$ as
\begin{equation}\label{eq:BDAbelq2}
(BD(L^{(q,\alpha)}))_{i,j}=\begin{cases}
q^{\binom{i-1}{2}},&i=j,\\
-\alpha q^{j-2}\left(\frac{[i-1]}{[i-2]}\right)^{i-j}[i-j] ,& 2\le j< i \le n+1,\\
0,& \text{otherwise}.
\end{cases}
\end{equation}
To illustrate the form of the bidiagonal factorization \eqref{BDAfac} of $L^{(q,\alpha)}$, let us write the particular case $n=3$,
{
\footnotesize
\begin{equation*}
\setlength\arraycolsep{4pt}
L^{(q,\alpha)} = 
\begin{pmatrix}
1 & 0 & 0 & 0 \\
0 & 1 & 0 & 0 \\
0  & 0 & 1 & 0  \\
0 & 0 & 0 & 1 
\end{pmatrix}
\begin{pmatrix}
1 & 0 & 0 & 0 \\
0 & 1 & 0 & 0 \\
0 & 0 & 1 & 0  \\
0 & 0 & -\alpha \left(\frac{[3]}{[2]}\right)^{2}[2] & 1 
\end{pmatrix}
\begin{pmatrix}
1 & 0 & 0 & 0 \\
0 & 1 & 0 & 0 \\
0  & -\alpha [2]  & 1 & 0  \\
0 & 0 & -\alpha q\frac{[3]}{[2]} & 1 
\end{pmatrix}
\begin{pmatrix}
1 & 0 & 0 & 0 \\
0 & 1 & 0 & 0 \\
0 & 0 & q & 0  \\
0 & 0 & 0 & q^{3} 
\end{pmatrix}.
\end{equation*}
}

The analysis of the sign of $\alpha$ in \eqref{eq:BDAbelq2} will allow us to characterize the total positivity property of $L^{(q,\alpha)}$. This fact is stated in the following result.

\begin{corollary}  \label{cor:L}
Let $L^{(q,\alpha)}\in {\mathbb R}^{ (n+1)\times (n+1)}$ be the matrix defined in \eqref{eq:qlij} and $J$ the diagonal matrix $J:=\textrm{diag}((-1)^{i-1}  )_{1\le i\le n+1}$.   
\begin{enumerate}
\item[a)]   The matrix $L^{(q,\alpha)}$ is  totally positive if and only if  $\alpha \leq 0$. 
\item[b)]  The matrix  $L_J^{(q,\alpha)}:=JL^{(q,\alpha)}J$ is  totally positive if and only if $\alpha \geq 0$. 
\end{enumerate}
Moreover, in both cases, the matrices $L^{(q,\alpha)}$,  $L_J^{(q,\alpha)}$ and their corresponding bidiagonal factorizations \eqref{BDAfac}  can be computed to HRA.
\end{corollary}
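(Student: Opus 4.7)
The plan is to deduce the claim directly from the bidiagonal factorization of $L^{(q,\alpha)}$ provided in Theorem~\ref{thm:BDALf} combined with the characterization of total positivity in Theorem~\ref{thm1}. First I would observe that by \eqref{eq:qlij} the matrix $L^{(q,\alpha)}$ is lower triangular, so its transpose is upper triangular and its Neville elimination is trivial (no row exchanges, all multipliers zero). This reduces Theorem~\ref{thm1} to the requirement that the diagonal pivots $p_{i,i}$ be positive and the subdiagonal multipliers $m_{i,j}$ be nonnegative. By \eqref{eq:pii}, $p_{i,i}=q^{\binom{i-1}{2}}>0$ since $q>0$, and by \eqref{eq:mij} each $m_{i,j}$ is equal to $-\alpha$ times a strictly positive quantity (since $q^{j-2}$, $[i-1]/[i-2]$ and $[i-j]$ are positive). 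Part~a) then follows immediately: $L^{(q,\alpha)}$ is TP if and only if $\alpha\le 0$.

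For part~b), I would use $J^{2}=I$ to rewrite the factorization \eqref{eq:BDAbelq} as
\[
L_J^{(q,\alpha)}=(JF_nJ)(JF_{n-1}J)\cdots (JF_1J)(JDJ).
\]
A direct calculation shows that $JF_iJ$ keeps exactly the lower bidiagonal structure of $F_i$, with ones on the diagonal and with each subdiagonal entry replaced by its opposite, while $JDJ=D$. Hence $L_J^{(q,\alpha)}$ admits a bidiagonal factorization of the form \eqref{BDAfac} with the same pivots $p_{i,i}$ and with multipliers $-m_{i,j}$. Applying the criterion of part~a) to this new factorization gives TP if and only if $-\alpha \le 0$, that is, $\alpha\ge 0$.

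For the HRA claim, the plan is to verify that \eqref{eq:BDAbelq2} can be evaluated so as to satisfy the no-inaccurate-cancellation condition. The only delicate point is the computation of the $q$-integer $[m]$: the closed form $(1-q^m)/(1-q)$ involves a subtraction of same-sign numbers and may lose accuracy, but the equivalent sum $[m]=\sum_{j=0}^{m-1}q^{j}$ is a sum of positive terms when $q>0$ and is therefore safe. Once the $q$-integers are in hand, every remaining operation in \eqref{eq:BDAbelq2} is a product, a quotient, or a single multiplication by $\alpha$, all of which preserve NIC. The same analysis covers $BD(L_J^{(q,\alpha)})$, whose entries differ from those of $BD(L^{(q,\alpha)})$ only by a sign fixed by $\alpha$.

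The main obstacle I anticipate is keeping careful track of the sign changes under conjugation by $J$ in part~b), ensuring that the flip affects precisely the subdiagonal multipliers and leaves the diagonal pivots (and the ones on the diagonals of the $F_i$) untouched. Everything else amounts to bookkeeping on the formulas already produced by Theorem~\ref{thm:BDALf}.
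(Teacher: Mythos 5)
Your proposal is correct and follows essentially the same route as the paper: both rest on the bidiagonal factorization of Theorem~\ref{thm:BDALf} and a sign analysis of the pivots \eqref{eq:pii} and multipliers \eqref{eq:mij}, with the NIC condition giving HRA. The only cosmetic difference is that for part~b) you re-derive the effect of conjugation by $J$ on the factorization (flipping the signs of the off-diagonal multipliers while fixing the pivots), whereas the paper simply cites Theorem~2 of \cite{MPRNewton} for the same fact.
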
 
\begin{proof}  Let  
$L^{(q,\alpha)}=  F_n \cdots F_{1}D$ be the bidiagonal factorization provided by Theorem \ref{thm:BDALf}.

\begin{enumerate}
\item[a)]  The entries $m_{i,j}$ in \eqref{eq:mij} are nonnegative and $p_{i,i}$ in \eqref{eq:pii} are positive if and only if $\alpha\leq 0$, and so, the diagonal matrix $D$ and the bidiagonal matrices $F_i$, $i=1,\ldots,n$, are nonsingular TP matrices. Then, $L^{(q,\alpha)}$ is a product of nonsingular TP matrices and, by Theorem 3.1 of \cite{Ando}, is  a nonsingular TP matrix.
 
\item[b)] By Theorem 2 presented in \cite{MPRNewton}, $L_J^{(q,\alpha)}$ is nonsingular totally positive if and only if the multipliers $m_{i,j}$ in \eqref{eq:mij} are non-positive and the pivots $p_{i,i}$ in \eqref{eq:pii} are positive---this happens if and only if $\alpha\geq 0$. 
\end{enumerate}
Furthermore, taking into account $BD(L^{(q,\alpha)})$ given in  \eqref{eq:BDAbelq2} and formula (11) in \cite{MPRNewton}, the bidiagonal decomposition \eqref{BDAfac} of $L_J^{(q,\alpha)}$  can be stored in a compact form through $BD(L_J^{(q,\alpha)})$  as
 \begin{equation}\label{eq:BDAbelq2J}
(BD(L_{J}^{(q,\alpha)}))_{i,j}=\begin{cases}
q^{\binom{i-1}{2}},&i=j,\\
\alpha q^{j-2}\left(\frac{[i-1]}{[i-2]}\right)^{i-j}[i-j] ,& 2\le j< i \le n+1,\\
0,& \text{otherwise}.
\end{cases}
\end{equation}
Note that, as in the case of $BD(L^{(q,\alpha)})$, there are not inaccurate subtractions and as a consequence \eqref{eq:BDAbelq2} and \eqref{eq:BDAbelq2J} can be computed to HRA.
\qed
\end{proof}

\section{Computations with $q$-Abel polynomials to HRA}\label{sec:tp}

Let us recall that the monomial basis  $(m_{0} ,\ldots,m_{n})$, with $m_i(x)=x^i$ for $i=0,\ldots,n$,  is a strictly totally positive basis of   ${\mathbf P}^n(0,+\infty)$ and so,  the  collocation matrix  

\begin{equation}\label{eq:V}
V :=\left( t_{i}^{j-1} \right)_{1\le i,j \le n+1},
\end{equation}
 is strictly totally positive for any increasing sequence of positive values  $0 <t_1< \cdots t_{n+1}$ (see Section 3 of \cite{Koev4}).  In fact, $V$ is the Vandermonde matrix at the considered nodes. Let us recall that Vandermonde matrices   have relevant  applications in Lagrange interpolation and numerical quadrature (see for example  \cite{Finck,Oruc}).  As for $BD(V ) $ we have

 \begin{equation}\label{eq:BDV}
 BD(V )_{i,j}:=\begin{cases}
    \prod_{k=1}^{j-1}  \frac{t_{i } - t_{i-k }  }{t_{i-1} -t_{i-k-1}  }  , & \textrm{if } i>j, \\[5pt] 
  \prod_{k=1}^{i-1} (t_{i } - t_{k }), & \textrm{if } i=j,  \\[5pt]  
   t_{i}  , & \textrm{if } i<j,  
 \end{cases}
\end{equation}
and it can be easily checked that the computation of  $BD(V )$ does not require inaccurate cancelations and can be performed to HRA. Using the representation \eqref{eq:BDV} of Vandermonde matrices at increasing sequences of nodes, algebraic problems with this matrices can be solved to HRA.

Now, let us observe that these well-known results on Vandermonde matrices for increasing positive nodes can be recovered from conveniently sign transformed Vandermonde matrices for decreasing negative nodes as stated in the following result.  

\begin{proposition}\label{th:Vdecreasing}
Let $J$ be the diagonal matrix $J:=\textrm{diag}((-1)^{i-1}  )_{1\le i\le n+1}$ and $V$ the Vandermonde matrix \eqref{eq:V}. 
Then, the matrix $V_{J} := VJ$ is strictly totally positive if and only if $0>t_1>\cdots>t_{n+1}$. Moreover, its bidiagonal decomposition \eqref{BDAfac}  can be computed to HRA.

Furthermore, for any negative decreasing sequence $0>t_1>\cdots>t_{n+1}$, the computation of  the singular values, the inverse matrix   $V^{-1}$, as well as  the  solution  of linear systems $Vc = d$, where the entries of $d = (d_1, \ldots , d_{n+1})^T$ have alternating signs, can be performed to HRA.
\end{proposition}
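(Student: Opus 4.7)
The plan is to reduce the statement to the known case of Vandermonde matrices at increasing positive nodes by a change of sign. Observe that
\[
(V_J)_{i,j} = (VJ)_{i,j} = (-1)^{j-1}\, t_i^{j-1} = (-t_i)^{j-1}.
\]
Hence, setting $s_i := -t_i$, the matrix $V_J$ equals the Vandermonde matrix at the nodes $s_1,\ldots,s_{n+1}$. The condition $0>t_1>\cdots>t_{n+1}$ translates exactly to $0<s_1<\cdots<s_{n+1}$, so the strict total positivity of $V_J$ and the formula \eqref{eq:BDV} for its bidiagonal factorization follow directly, with $t_i$ replaced by $-t_i$ throughout.

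Next I would verify that this bidiagonal representation can be computed to HRA. The diagonal and below-diagonal entries of $BD(V_J)$ involve, after the substitution, differences of the form $s_i-s_k = -t_i+t_k = t_k-t_i$, which are products and quotients of \emph{positive} quantities (because the $t_i$ are negative and strictly decreasing, $t_k-t_i>0$ for $k<i$); the above-diagonal entries are simply $-t_i>0$. Thus every arithmetic operation in \eqref{eq:BDV} applied to the nodes $-t_i$ is either a product, a quotient, or a subtraction of initial data, so the NIC condition is met and $BD(V_J)$ is obtained to HRA.

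Finally, I would transfer the HRA computations from $V_J$ to $V$ using $J^2=I$ and the orthogonality of $J$. The singular values of $V$ coincide with those of $V_J = VJ$ because $J$ is orthogonal, so they are computed to HRA by applying Koev's algorithms of \cite{Koev3} to $BD(V_J)$. For the inverse, one writes $V^{-1} = J\,V_J^{-1}$; Koev's algorithm gives $V_J^{-1}$ to HRA, and left multiplication by $J$ only flips signs of rows, which preserves HRA. For a linear system $Vc=d$, observe that
\[
Vc=d \iff V_J(Jc)=d,
\]
so if $d$ has alternating signs, Koev's algorithm applied to $V_J$ with right-hand side $d$ yields $Jc$ to HRA, and a final left multiplication by $J$ returns $c$ to HRA.

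The only delicate point in the argument is that every transformation between $V$ and $V_J$ must be HRA-compatible, but since these transformations amount to multiplication by the signature matrix $J$, they cause neither accuracy loss nor sign-ambiguous cancellations; the heart of the proof is the elementary identity $(-1)^{j-1}t_i^{j-1}=(-t_i)^{j-1}$ which reduces everything to Vandermonde matrices at increasing positive nodes already handled in \cite{Koev4}.
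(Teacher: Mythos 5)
Your proposal is correct and follows essentially the same strategy as the paper: reduce to the positive increasing-node case, read off the same $BD(V_J)$, verify the NIC condition, and transfer the singular values, inverse and linear systems through $J$ using $J^2=I$. The one place where you take a neater route is the opening identity $(VJ)_{i,j}=(-1)^{j-1}t_i^{j-1}=(-t_i)^{j-1}$, which exhibits $V_J$ as a genuine Vandermonde matrix at the nodes $s_i=-t_i$ and lets you import \eqref{eq:BDV} by direct substitution; the paper instead conjugates the bidiagonal factors, writing $V_J=F_n\cdots F_1(DJ)(JG_1J)\cdots(JG_nJ)$ and recomputing the entries, which is slightly heavier but arrives at the identical representation \eqref{eq:BDVJentries}. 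One small point you gloss over is the \emph{only if} direction of the equivalence: your reduction delivers it only if the positive-node statement is itself a characterization, whereas the reference is quoted as a sufficiency result. The paper closes this by arguing directly from the positivity of the $BD(V_J)$ entries (from $p_{2,2}=-(t_2-t_1)>0$ one gets $t_1>t_2$, the multipliers $m_{i,2}$ propagate the ordering by induction, and $\tilde m_{j,i}=-t_i>0$ forces negativity of the nodes); you should add the analogous one-line argument (positivity of the entries of a Vandermonde matrix forces positive nodes, and positivity of the $2\times 2$ minors of the first two columns forces the increasing order), after which your proof is complete.
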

\begin{proof}

Let us consider the bidiagonal factorization $V=F_{n}\cdots F_{1} D G_{1}\cdots G_{n}$. Since $J=J^{-1}$, then
\begin{equation}\label{BDAfacJ} 
V_J=F_{n}\cdots F_{1} D G_{1}\cdots G_{n}J= F_{n}\cdots F_{1}  (DJ) (JG_{1}J)\cdots(JG_{n}J)
=F_{n}\cdots F_{1} \tilde D \tilde G_{1}\cdots \tilde G_n,
\end{equation}
with $\tilde D:=DJ$ and $ \tilde G_k:=JG_{k}J$, $k=1,\ldots,n$.

Taking into account  \eqref{eq:BDV}, the bidiagonal decomposition \eqref{BDAfacJ} of  $V_J$   can be stored in a matrix form through $BD(V_{J} )_{i,j}$  as
 \begin{equation}\label{eq:BDVJ}
 BD(V_J )_{i,j}:=\begin{cases}
    m_{i,j}  , & \textrm{if } i>j, \\
    p_{i,i}  , & \textrm{if } i=j,  \\ 
    \tilde{m}_{j,i} , & \textrm{if } i<j,  
 \end{cases}
\end{equation}
where
 \begin{equation}\label{eq:BDVJentries}
  m_{i,j} = \prod_{k=1}^{j-1}  \frac{t_{i } - t_{i-k }  }{t_{i-1} -t_{i-k-1}  }, \quad p_{i,i}=(-1)^{i-1}\prod_{k=1}^{i-1} (t_{i } - t_{k }), \quad \tilde{m}_{j,i}=-t_i.
\end{equation}

By \eqref{eq:BDVJentries},   for $0>t_1>\cdots>t_{n+1}$, we clearly have $m_{i,j}>0$, $\tilde{m}_{i,j}>0$ and ${p}_{i,i}>0$, which means that $V_J$ is a product of STP matrices and so, it is also STP. Conversely, if $V_{J}$ is strictly totally positive then the entries  $m_{i,j}, \tilde m_{i,j}$ and $p_{i,i}$ in \eqref{eq:BDVJentries}  take positive values and we can deduce the admissible order of the nodes $t_i$. Since $p_{2,2}=-(t_2-t_1)>0$, we derive $t_1>t_2$. Now, from the expression of the multiplier $m_{i,2}$ we can write
 $$
   t_i-t_{i-1}=m_{i,2}(t_{i-1}-t_{i-2}),  \quad i=3,\ldots,n+1,
 $$
and, by induction, $t_i-t_{i-1}{>}0$ for $i=2,\ldots,n+1$. Finally, all $t_i$ must be non-positive since $\tilde m_{i,j}=-t_i\geq0$.

On the other hand, the  subtractions in the computation of the entries $m_{i,j}$, $p_{i,i}$ and $\tilde{m}_{i,j}$ involve only initial data and, as a consequence,  the matrix representation $BD(V_{J})$  in \eqref{eq:BDVJ} can be computed to HRA.  

Finally,  the resolution of the  algebraic problems mentioned in the statement  to HRA can be achieved using the matrix representation \eqref{eq:BDVJ} and the Matlab subroutines available in Koev's web page \cite{Koev3}. In this regard,  since  $J$ is a unitary matrix, the singular values of  $V_{ J}$ coincide with those of  $V$. Similarly, taking into account that  
\begin{equation*}
V^{-1}= JV^{-1}_{J},
\end{equation*}
we can compute $V^{-1}$ accurately. Finally,  if we have a linear system of equations  $Vc= d$, where the elements of  $d = (d_1, \ldots , d_{n+1})^T$  have alternating signs,  we can solve to HRA the  system  $V_{J}b = d$ and then obtain $c=Jb$. 
\qed
\end{proof}

Taking into account Corollary  \ref{cor:L} and Proposition \ref{th:Vdecreasing}, we shall analyze the total positivity of $q$-Abel bases, as well as factorizations providing computations to HRA when considering their collocation matrices. 

\begin{theorem}  \label{thm:collocation}
Given a sequence of parameters $t_1,\ldots, t_{n+1}$, with $t_i\ne t_j$ for $i\ne j$. Let
\begin{equation}\label{eq:collA}
A^{(q,\alpha)}:=(A_{j-1}^{(q,\alpha)}(t_i))_{1\leq i,j\leq n+1},
\end{equation}
 be the collocation matrix  of the q-Abel basis \eqref{eqBaseabel} and  $J:=\textrm{diag}((-1)^{i-1}  )_{1\le i\le n+1}$. Then
\begin{enumerate}
\item[a)] If  $0<t_1<\cdots<t_{n+1}$ and $\alpha\leq0$,  $A^{(q,\alpha)}$ is STP.
\item[b)]  If  $0>t_1>\cdots>t_{n+1}$ and $\alpha\geq0$,   
$A^{(q,\alpha)}_{J} := A^{(q,\alpha)} J$ is STP.
\end{enumerate}
Furthermore, in both cases, the matrices  $A^{(q,\alpha)}$, $A^{(q,\alpha)}_{J}$ and their bidiagonal decompositions \eqref{eq:BDA} can be computed to HRA. Consequently, for  $0<t_1<\cdots<t_{n+1}$ with $\alpha\leq0$  and    $0>t_1>\cdots>t_{n+1}$ with $\alpha\geq0$, the eigenvalues (only for $0<t_1<\cdots<t_{n+1}$ with $\alpha\leq0$), the singular values and the inverse matrix of $A^{(q,\alpha)}$, as well as the solution of linear systems $A^{(q,\alpha)}c = d$, where the entries of $d = (d_1, \ldots , d_{n+1})^T$ have alternating signs, can be computed to HRA.
\end{theorem}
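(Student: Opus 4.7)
The plan is to express the collocation matrix as a product of two factors we have already analyzed. From \eqref{eq:L}, the $q$-Abel polynomials are linear combinations of monomials through $L^{(q,\alpha)}$, so evaluating at the nodes yields the factorization
\[
A^{(q,\alpha)} \;=\; V\,(L^{(q,\alpha)})^T,
\]
where $V$ is the Vandermonde matrix at $t_1,\ldots,t_{n+1}$. For part (a), under $0<t_1<\cdots<t_{n+1}$ the matrix $V$ is STP, while under $\alpha\le 0$, Corollary \ref{cor:L}(a) gives that $L^{(q,\alpha)}$, and hence $(L^{(q,\alpha)})^T$, is nonsingular TP. Since the product of an STP matrix with a nonsingular TP matrix is STP (Theorem 3.1 of \cite{Ando}), $A^{(q,\alpha)}$ is STP. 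For part (b), I would use $J^{2}=I$ to rewrite
\[
A^{(q,\alpha)}_J \;=\; V\,(L^{(q,\alpha)})^T\,J \;=\; VJ\cdot J\,(L^{(q,\alpha)})^T\,J \;=\; V_J\,\bigl(L_J^{(q,\alpha)}\bigr)^T,
\]
and then invoke Proposition \ref{th:Vdecreasing} (which gives $V_J$ STP for negative decreasing nodes) together with Corollary \ref{cor:L}(b) (which gives $L_J^{(q,\alpha)}$ nonsingular TP for $\alpha\ge 0$); the same product argument closes the case.

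For the HRA statements, both $BD(V)$ in \eqref{eq:BDV} and $BD(L^{(q,\alpha)})$ in \eqref{eq:BDAbelq2}, respectively $BD(L_J^{(q,\alpha)})$ in \eqref{eq:BDAbelq2J}, involve only products, quotients and subtractions of initial data (node differences or $q$-integers); by the NIC criterion they can each be computed to HRA. I would then feed the two bidiagonal arrays into Koev's \textsc{TNProduct} routine from \cite{Koev3}, which converts the bidiagonal decompositions of two TP factors into the bidiagonal decomposition of their product without inaccurate cancellations, thereby delivering $BD(A^{(q,\alpha)})$ or $BD(A^{(q,\alpha)}_J)$ to HRA. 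The library routines \textsc{TNEigenValues}, \textsc{TNSingularValues}, \textsc{TNInverseExpand} and \textsc{TNSolve} then produce eigenvalues (meaningful only in case (a), since conjugation by $J$ does not preserve spectrum), singular values, inverse, and system solutions to HRA. In case (b) the quantities associated with $A^{(q,\alpha)}$ are recovered from those of $A^{(q,\alpha)}_J$ via orthogonality of $J$ for singular values, via $(A^{(q,\alpha)})^{-1}=J(A^{(q,\alpha)}_J)^{-1}$ for the inverse, and via the substitution transforming $A^{(q,\alpha)}c=d$ with alternating-sign $d$ into $A^{(q,\alpha)}_J(Jc)=d$, mirroring the closing argument of Proposition \ref{th:Vdecreasing}.

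The main obstacle is the HRA step rather than the total positivity claim: showing that combining the two HRA-computable bidiagonal factorizations into a bidiagonal decomposition of the product does not create like-sign subtractions. The STP conclusion is essentially bookkeeping once the identity $A^{(q,\alpha)}=V(L^{(q,\alpha)})^T$ is available, but the HRA assertion hinges on the NIC-preserving property of the product algorithm in \cite{Koev3}; verifying that its internal operations on $BD(V)$ and $BD(L^{(q,\alpha)})$ indeed avoid catastrophic cancellation is where the substantive work lies.
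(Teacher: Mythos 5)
Your proposal is correct and follows essentially the same route as the paper: the factorization $A^{(q,\alpha)}=V(L^{(q,\alpha)})^T$, the insertion of $J^2=I$ to obtain $A^{(q,\alpha)}J=(VJ)\bigl(J(L^{(q,\alpha)})^TJ\bigr)$ for case (b), Theorem 3.1 of \cite{Ando} for the STP conclusion, and the HRA-preserving product algorithm of \cite{Koev2,Koev3} applied to the two bidiagonal decompositions. The only cosmetic difference is that the paper cites Algorithm 5.1 of \cite{Koev2} by name for the product step rather than framing its cancellation-free character as an open verification, but this is the same argument.
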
 

\begin{proof}

Let $(m_0,\ldots,m_n)$ be the monomial basis of ${\mathbf P}^n$ and  $ {L^{(q,\alpha)}}$ be the change of basis matrix such that $(A_0^{(q,\alpha)},   \ldots, A_n^{(q,\alpha)})= (m_{0},\ldots, m_{n}) (L^{(q,\alpha)})^T$ (see \eqref{eq:L}). Then,  
\begin{equation}\label{eq:change}
A^{(q,\alpha)}=V (L^{(q,\alpha)})^T,
\end{equation}
where $V$ is the Vandermonde matrix defined in \eqref{eq:V}.
  
\begin{enumerate}
\item[a)] If  $0<t_1<\cdots<t_{n+1}$, the Vandermonde matrix $V$ is STP  and its decomposition \eqref{eq:BDV} can be computed to HRA. Moreover, for $\alpha\leq0$, by Corollary \ref{cor:L}, $L^{(q,\alpha)}$ is a nonsingular  TP matrix and can be computed to HRA. By formula \eqref{eq:BDAtranspose}, its transpose  is also a nonsingular TP matrix and can be computed to HRA. As a direct consequence of these facts and taking into account that, by Theorem 3.1 of  \cite{Ando}, the product of an STP matrix and a nonsingular TP matrix is an STP matrix, we deduce that $A^{(q,\alpha)}$ is STP. Furthermore, the bidiagonal decomposition \eqref{BDAfac} of $A^{(q,\alpha)}$ can be obtained to HRA using Algorithm 5.1 \cite{Koev2}, since it is the product of two nonsingular TP matrices whose decompositions are known to HRA.

\item[b)] If  $0>t_1>\cdots>t_{n+1}$,  using Theorem \ref{th:Vdecreasing},  we deduce that $VJ$ is STP  and its bidiagonal factorization \eqref{BDAfacJ}  can be obtained to HRA. Additionally, if $\alpha\geq0$, using Corollary \ref{cor:L}, we deduce that $JL^{(q,\alpha)}J$ is a nonsingular TP matrix  and its bidiagonal factorization \eqref{BDAfac} can be computed to HRA.  Since $J=J^{-1}$, from \eqref{eq:change}, we can write
\begin{equation}\label{eq:JWJ}
A^{(q,\alpha)}J  =(VJ) (J(L^{(q,\alpha)})^TJ),
\end{equation}
and deduce that $A^{(q,\alpha)}J$ is STP because it can be written as the product of STP  and nonsingular TP matrices. Analogously to the previous case, the bidiagonal decomposition of $A_J^{(q,\alpha)}$ can be computed to HRA, being the product of two bidiagonal factorizations that are also computed to HRA.
\end{enumerate}
With a similar reasoning as in Proposition \ref{th:Vdecreasing} we can guarantee the resolution of the mentioned algebraic problems with the matrices $A^{(q,\alpha)}$ and $A^{(q,\alpha)}_{J}$.
\qed
\end{proof}

As a direct consequence of the strict  total positivity property of the collocation matrix $A^{(q,\alpha)}$ \eqref{eq:collA} of the q-Abel basis for $\alpha \le 0$, the following results holds. 
\begin{corollary}
For a given $\alpha \le 0$, the $q$-Abel basis $(A_0^{(q,\alpha)},   \ldots, \,A_n^{(q,\alpha)})$ defined in \eqref{eqBaseabel} is an STP basis of \,${\mathbf P}^n(0,+\infty)$.
\end{corollary}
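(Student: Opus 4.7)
The plan is to treat the corollary as a direct consequence of Theorem~\ref{thm:collocation}(a), because the statement is essentially an unpacking of the definition of an STP basis in terms of collocation matrices. Recall that a sequence of functions $(u_0,\ldots,u_n)$ defined on an interval $I$ is called an STP basis of the space it spans provided that, for every strictly increasing sequence of nodes $t_1<\cdots<t_{n+1}$ in $I$, the associated collocation matrix $M(t_1,\ldots,t_{n+1})=(u_{j-1}(t_i))_{1\le i,j\le n+1}$ is strictly totally positive. In our setting, the basis under consideration is $(A_0^{(q,\alpha)},\ldots,A_n^{(q,\alpha)})$ and its collocation matrix at an arbitrary sequence of nodes is precisely the matrix $A^{(q,\alpha)}$ defined in \eqref{eq:collA}.

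The argument is then the following single observation: fix any $\alpha\le 0$ and any strictly increasing sequence $0<t_1<\cdots<t_{n+1}$ of points in $(0,+\infty)$; by Theorem~\ref{thm:collocation}(a), the collocation matrix $A^{(q,\alpha)}=(A_{j-1}^{(q,\alpha)}(t_i))_{1\le i,j\le n+1}$ is STP. Since the choice of nodes was arbitrary within $(0,+\infty)$, the basis $(A_0^{(q,\alpha)},\ldots,A_n^{(q,\alpha)})$ meets the defining condition for an STP basis of ${\mathbf P}^n(0,+\infty)$, which is exactly what we wanted to prove.

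There is no genuine obstacle here: all the work has been carried out in the proof of Theorem~\ref{thm:collocation}(a), where the strict total positivity of $A^{(q,\alpha)}$ was obtained by factoring it as the product $V(L^{(q,\alpha)})^{T}$ of the STP Vandermonde matrix at the positive increasing nodes and the nonsingular TP matrix $(L^{(q,\alpha)})^{T}$ (whose TP character for $\alpha\le 0$ comes from Corollary~\ref{cor:L}(a)), and invoking Theorem~3.1 of \cite{Ando}. The present corollary merely records the basis-level consequence of this matrix-level statement, so the proof can be written in one or two sentences.
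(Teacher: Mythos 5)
Your proposal is correct and coincides with the paper's own treatment: the paper presents this corollary as an immediate consequence of the strict total positivity of the collocation matrices $A^{(q,\alpha)}$ established in Theorem~\ref{thm:collocation}(a), with no further argument needed. Your unpacking of the definition of an STP basis via arbitrary increasing node sequences in $(0,+\infty)$ is exactly the intended (and only) content of the proof.
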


Corollary 1 of \cite{MPRCalcolo} provides the   factorization~\eqref{BDAfac} of ${W}:=W(m_{0},\ldots,m_{n})(x)$,
  the Wronskian matrix of the monomial basis $(m_{0},\ldots,m_{n})$ at $x\in\mathbb R$. For~the matrix representation $BD( {W})$, we have
\begin{equation}\label{eq:BDWmonomials}
 BD( {W})_{i,j}:=\begin{cases}
  x, & \textrm{if } i<j, \\[5pt] 
 (i-1)!,  & \textrm{if } i=j,  \\[5pt]
 0 , & \textrm{if } i>j.  
 \end{cases}
\end{equation}
Taking into account the sign of the entries  of $BD( {W})$ and Theorem \ref{thm1}, one can derive that  the Wronskian matrix of the monomial basis is nonsingular totally positive 
 for any  $x\geq0$. Moreover, the~computation of~\eqref{eq:BDWmonomials} satisfies the NIC condition and $W$ and its bidiagonal decomposition $BD(W)$ can be computed to high relative accuracy---in the case of $W$, this is achieved through the \verb"TNExpand" routine (see  \cite{Koev3}) which recovers a matrix from its bidiagonal decomposition, preserving high relative accuracy. Using~\eqref{eq:BDWmonomials}, computations to HRA  when solving algebraic problems related to $W$ have been achieved in \cite{MPRCalcolo} for $x\geq0$.

Moreover, Corollary 6 of  \cite{MPRNewton} provides the   factorization~\eqref{BDAfac} of 
 $W_{J}:=JWJ$, where $J$ is the diagonal matrix $J=\textrm{diag}((-1)^{i-1}  )_{1\le i\le n+1}$ and derives that $W_{J}$ is nonsingular totally positive for $x\leq0$.
 For the matrix representation $BD( {W_{J}})$, we have
\begin{equation}\label{eq:BDWJmonomials}
 BD( {W_{J}})_{i,j}:=\begin{cases}
  -x, & \textrm{if } i<j, \\[5pt] 
 (i-1)!,  & \textrm{if } i=j,  \\[5pt]
 0 , & \textrm{if } i>j.  
 \end{cases}
\end{equation}
Furthermore,  the~computation of~\eqref{eq:BDWJmonomials} satisfies the NIC condition and $W$ and $BD(W_{J})$ can be computed to HRA \cite{Koev3}. Using formula \eqref{eq:L},  it can be checked that  
\begin{equation}\label{eq:changeW}
W( {A}_0^{(q,\alpha)}, \ldots ,  {A}_n^{(q,\alpha)})(x)  = W(m_{0},\ldots,m_{n})(x) (L_n^{(q,\alpha)})^{T}
\end{equation}
and then we can write
\begin{equation}\label{eq:changeWJ}
JWJ = (JW(m_{0},\ldots,m_{n})(x)J) (J(L_n^{(q,\alpha)})^{T}J).
\end{equation}

So, with the reasoning of the proof of Theorem \ref{thm:collocation} and taking into account Theorem 2 presented in  \cite{MPRNewton}, the next result follows and we can also guarantee computations to HRA  when solving algebraic problems related to   Wronskian matrices of $q$-Abel polynomial bases.

\begin{theorem}  \label{thmWAbel}
 Let   $J:=\textrm{diag}((-1)^{i-1}  )_{1\le i\le n+1}$ and  $W:=W( {A}_0^{(q,\alpha)}, \ldots ,  {A}_n^{(q,\alpha)})(x)$   be  the Wronskian matrix of the q-Abel basis  defined in  \eqref{eqBaseabel}. Then
 
\begin{itemize} 
\item[a)] If $x\ge 0$ and $\alpha\leq0$ then    $W$ is  TP.  
\item[b)]  If   $x\le 0$ and $\alpha\geq0$ then  $W_{J} = JWJ$ is TP. 
\end{itemize}
Furthermore, in both cases, the matrices $W$, $W_{J}$ and their bidiagonal decompositions \eqref{eq:BDA}  can be computed to HRA.
Consequently, for $x\ge 0$ with $\alpha\leq0$ and  $x\le 0$ with $\alpha\geq0$, the eigenvalues, the singular values,  and  the inverse matrix of \,$W$, as well as the  solution  of linear systems $Wc = d$, where the entries of $d = (d_1, \ldots , d_{n+1})^T$ have alternating signs, for the case $x > 0$ and $\alpha\leq0$ and  the entries of $d = (d_1, \ldots , d_{n+1})^T$ have the same signs, for the case  $x< 0$ and $\alpha\geq0$, can be computed to HRA.
\end{theorem}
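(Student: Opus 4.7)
The approach is to mirror the proof of Theorem \ref{thm:collocation} with the Vandermonde matrix replaced by the Wronskian of the monomial basis. The cornerstone is the factorization \eqref{eq:changeW}, which expresses $W$ as a product of a Wronskian of monomials and $(L^{(q,\alpha)})^T$. All that is needed is to combine the total positivity of each factor (under the appropriate sign conditions) with Theorem 3.1 of \cite{Ando} on products of nonsingular TP matrices.

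For part (a), I would first recall that $W(m_0,\ldots,m_n)(x)$ is nonsingular TP whenever $x\ge 0$, via the representation \eqref{eq:BDWmonomials}. Next, by Corollary~\ref{cor:L}, $L^{(q,\alpha)}$ is nonsingular TP for $\alpha\le 0$, and its transpose inherits this property through the symmetric factorization \eqref{eq:BDAtranspose}. Applying Theorem 3.1 of \cite{Ando} to \eqref{eq:changeW} yields that $W$ is nonsingular TP. For part (b), the same argument is repeated with the two factors in \eqref{eq:changeWJ}: the matrix $JW(m_0,\ldots,m_n)(x)J$ is nonsingular TP for $x\le 0$ (using \eqref{eq:BDWJmonomials} together with Theorem~2 of \cite{MPRNewton}), and $J(L^{(q,\alpha)})^TJ$ is nonsingular TP for $\alpha\ge 0$ by Corollary~\ref{cor:L} (combined again with the transpose identity \eqref{eq:BDAtranspose}). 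Their product $W_J$ is therefore nonsingular TP.

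For the HRA claims, the key point is that each of the two factors in \eqref{eq:changeW} (respectively \eqref{eq:changeWJ}) admits a bidiagonal decomposition whose stored entries are obtained without subtractive cancellation: the monomial Wronskian via \eqref{eq:BDWmonomials}/\eqref{eq:BDWJmonomials}, and $L^{(q,\alpha)}$ via the compact representation \eqref{eq:BDAbelq2} (or \eqref{eq:BDAbelq2J}). The bidiagonal decomposition of the product is then recovered to HRA by Koev's Algorithm~5.1 in \cite{Koev2}, which composes bidiagonal factorizations of nonsingular TP matrices while preserving HRA. Once $BD(W)$ (respectively $BD(W_J)$) is available to HRA, Koev's subroutines in \cite{Koev3} yield the eigenvalues, singular values, inverse and the solution of suitable linear systems to HRA.

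The only subtlety worth highlighting is the different sign patterns required of the right-hand side $d$ in the two cases. In case~(a), Koev's algorithm directly requires $d$ to have alternating signs, which is what is stated. In case~(b), the system $Wc=d$ is equivalent to $W_J(Jc)=Jd$, and applying Koev's algorithm to this system requires $Jd$ to have alternating signs, i.e.\ $d$ itself must have all entries of the same sign. This transformation, together with the fact that $J$ is unitary (so singular values are preserved) and $W^{-1}=JW_J^{-1}J$, completes the accuracy claims. I do not anticipate any major obstacle beyond bookkeeping, since every ingredient is already in place from Corollary~\ref{cor:L}, the discussion preceding the statement, and the analogous reasoning used in Theorem~\ref{thm:collocation}.
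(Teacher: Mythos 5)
Your proposal is correct and follows essentially the same route as the paper, which proves this result by combining the factorizations \eqref{eq:changeW} and \eqref{eq:changeWJ} with the total positivity of the monomial Wronskian (via \eqref{eq:BDWmonomials} and \eqref{eq:BDWJmonomials}), Corollary \ref{cor:L}, Theorem 3.1 of \cite{Ando}, and the reasoning of Theorem \ref{thm:collocation} for the HRA claims. Your remark on the sign pattern of $d$ in case (b) matches the intended argument.
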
 

 It is well known that the polynomial space  ${\mathbf P}^n([0,1])$    is a Hilbert space under the inner product
\begin{equation}\label{eq:innerp01}
\left<p,q\right> := \int_0^1 p(x)q(x)\, dx,
\end{equation}
and the  Gramian matrix of the monomial basis  $(m_0,\ldots,m_n)$  with respect to~\eqref{eq:innerp01}  is:
\begin{equation}\label{eq:Hilbert}
 H :=\left( \int_ 0^1 x^{i+j-2} \,dx   \right)_{1\le i,j\le n+1}=\left( \frac{1}{i+j-1}    \right)_{1\le i,j\le n+1}.
\end{equation}
The matrix $H_{n}$ is the $(n+1)\times (n+1)$ Hilbert matrix which is a particular case of a Cauchy matrix. In~Numerical Linear Algebra, Hilbert matrices are  well-known Hankel matrices. Their inverses and  determinants have explicit formulas; however, they are    very ill-conditioned  for moderate values of their dimension. Then,  they  can be used to test numerical algorithms and see how they perform on  ill-conditioned or nearly singular matrices.
  It is well known that Hilbert matrices are STP. In \cite{Koev4},  the~pivots and the multipliers of the   NE of $H$ are explicitly derived. It can be checked that  $BD(H)=(BD(H)_{i,j})_{1\le i,j\le n+1}$ is given by
\begin{equation}\label{eq:BDAMBernsteinN}
 BD(H)_{i,j}:=\begin{cases}
  \frac{ (i-1)^2}{(i+j-1)(i+j-2)}, & \textrm{if } i>j, \\[5pt]  
 \frac{  (i-1)!  ^4  }{(2i-1)!(2i-2)! },  & \textrm{if } i=j,   \\[5pt] 
  \frac{ (j-1)^2}{(i+j-1)(i+j-2)},  & \textrm{if } i<j. 
 \end{cases}
\end{equation}
Clearly, the~computation of the factorization~\eqref{BDAfac}  of $H$ does not require inaccurate cancellations and so, it  can be  computed to HRA.    

Using formula~\eqref{eq:L},  it can be checked that the $(n+1)\times (n+1)$ Gramian matrix   ${G}^{(q,\alpha)}$ with~respect to the inner product~\eqref{eq:innerp01},   of the $q$-Abel  basis  $(A_0^{(q,\alpha)},   \ldots, A_n^{(q,\alpha)})$ defined by \eqref{eqBaseabel}  ,  can be written as follows,
\begin{equation}\label{eq:GramqAbel}
{G}^{(q,\alpha)}=L^{(q,\alpha)}  {H} (L^{(q,\alpha)} )^T,
\end{equation}
 where  $L^{(q,\alpha)}$ is the $(n+1)\times (n+1)$ matrix given by \eqref{eq:qlij}. According to the reasoning  in the proof of Theorem \ref{thm:collocation},  the~following result can be~deduced.

\begin{theorem} \label{thm:GramqAbel}  If $\alpha{\le 0}$, the Gramian matrix  ${G}^{(q,\alpha)}$   is STP. Moreover, ${G}^{(q,\alpha)}$   and its bidiagonal decomposition ~\eqref{BDAfac} can be computed to HRA. Consequently, the eigenvalues,  singular values  and  the inverse matrix  of $G^{(q,\alpha)}$, as well as  the  solution  of linear systems $G^{(q,\alpha)}c = d$, where the entries of $d = (d_1, \ldots , d_{n+1})^T$ have alternating signs, can be computed to HRA.
\end{theorem}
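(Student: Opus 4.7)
The plan is to piece together the result from three bidiagonal factorizations already available to HRA: the one for $L^{(q,\alpha)}$ from Theorem \ref{thm:BDALf} (and Corollary \ref{cor:L}), the one for the Hilbert matrix $H$ recorded in \eqref{eq:BDAMBernsteinN}, and the one for $(L^{(q,\alpha)})^T$ obtained from the first by formula \eqref{eq:BDAtranspose}. All three ingredients, under the hypothesis $\alpha\le 0$, are nonsingular TP matrices whose bidiagonal decompositions can be computed without inaccurate cancellations.

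For the first claim (strict total positivity), I would argue as follows. By Corollary \ref{cor:L}(a), $L^{(q,\alpha)}$ is a nonsingular TP matrix for $\alpha\le 0$, and by \eqref{eq:BDAtranspose} so is $(L^{(q,\alpha)})^T$. The Hilbert matrix $H$ is STP, as recalled just before \eqref{eq:BDAMBernsteinN}. Applying Theorem 3.1 of \cite{Ando} (already invoked in the proof of Theorem \ref{thm:collocation}), the product of a nonsingular TP matrix with an STP matrix is STP; applying it twice to the factorization \eqref{eq:GramqAbel} yields that $G^{(q,\alpha)} = L^{(q,\alpha)} H (L^{(q,\alpha)})^T$ is STP.

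For the HRA claims, the idea is to reuse the same machinery employed in Theorem \ref{thm:collocation}(a). The bidiagonal decompositions $BD(L^{(q,\alpha)})$ in \eqref{eq:BDAbelq2} and $BD(H)$ in \eqref{eq:BDAMBernsteinN} both satisfy the NIC condition, so they are computed to HRA; $BD((L^{(q,\alpha)})^T)$ is obtained from $BD(L^{(q,\alpha)})$ via \eqref{eq:BDAtranspose} without any subtractions and hence also to HRA. Then Algorithm 5.1 of \cite{Koev2}, applied first to the pair $(L^{(q,\alpha)}, H)$ and then to the resulting product with $(L^{(q,\alpha)})^T$, yields the bidiagonal decomposition \eqref{BDAfac} of $G^{(q,\alpha)}$ to HRA. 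With $BD(G^{(q,\alpha)})$ in hand, the Matlab routines of \cite{Koev3} give HRA computation of the eigenvalues and singular values (\texttt{TNEigenvalues}, \texttt{TNSingularValues}), of the inverse (\texttt{TNInverseExpand}), and of the solution of $G^{(q,\alpha)}c=d$ when $d$ has alternating signs (\texttt{TNSolve}).

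The main obstacle is really a bookkeeping one: one must check that chaining Koev's algorithm for the two multiplications does not propagate cancellation error. This is guaranteed because each of $BD(L^{(q,\alpha)})$, $BD(H)$, and $BD((L^{(q,\alpha)})^T)$ has strictly nonnegative entries in its off-diagonal positions (under $\alpha\le 0$) and strictly positive diagonal pivots, so the multiplication routine only ever combines quantities of the same sign. With this verified, the statement follows in the same way that Theorem \ref{thm:collocation}(a) followed from the analogous product $V(L^{(q,\alpha)})^T$.
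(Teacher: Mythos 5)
Your proposal is correct and follows essentially the same route as the paper: the paper's proof of this theorem is precisely ``by the reasoning of Theorem \ref{thm:collocation}'' applied to the factorization \eqref{eq:GramqAbel}, i.e., total positivity via Theorem 3.1 of \cite{Ando} applied twice to $L^{(q,\alpha)} H (L^{(q,\alpha)})^T$, and HRA via the chained product of the three bidiagonal decompositions (cf.\ the \verb"TNProduct" calls recorded in Table \ref{TableAlgorithms}). Your fleshing-out of the cancellation-free chaining is consistent with what the paper leaves implicit.
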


Finally, it is worth noting that the results of this Section, valid for any positive $q$, hold in particular for $q=1$, which corresponds to the usual Abel polynomials. This implies, e.g., that, for $\alpha\le 0$ the basis of Abel polynomials is an STP basis of ${\mathbf P}^n(0,+\infty)$. In the next section, accurate computations are shown when solving algebraic problems with collocation, Wronskian and Gram matrices of the q-Abel basis for different values of $q$, including $q=1$.

\section{Numerical experiments}\label{sec:experimentos}
 
In order to illustrate the theoretical results obtained in previous sections, in what follows some numerical tests are presented. Three different matrices related to q-Abel bases, for $q>0$, have been considered:
\begin{itemize} 
\item Collocation matrices $A^{(q,\alpha)}$ \eqref{eq:change} for parameters   $0<t_1<\cdots<t_{n+1}$ with $\alpha\leq0$, and for parameters $0>t_1>\cdots>t_{n+1}$ with $\alpha\geq0$. 
\item  Wronskian matrices $W^{(q,\alpha)}$ \eqref{eq:changeW} for  $x\geq0$ with $\alpha\leq0$, and  for $x\leq0$ with $\alpha\geq0$.
\item Gramian matrices $G^{(q,\alpha)}$ \eqref{eq:GramqAbel} for  $\alpha\leq0$.
\end{itemize}

To give an idea of the difficulties that arise when addressing these matrices with traditional methods, its $2$-norm condition number---i.e., the ratio between the largest and the smallest singular values---was computed in Mathematica with 200-digit arithmetic and is presented in Figure \ref{fig:kappa2}. As can be seen, the conditioning rises severely with the size $n$ and, as a consequence, standard methods are vulnerable to inaccurate cancellations and ultimately fail to deliver accurate solutions.

\begin{figure}[h]
\begin{center}
\includegraphics[width=0.45\linewidth]{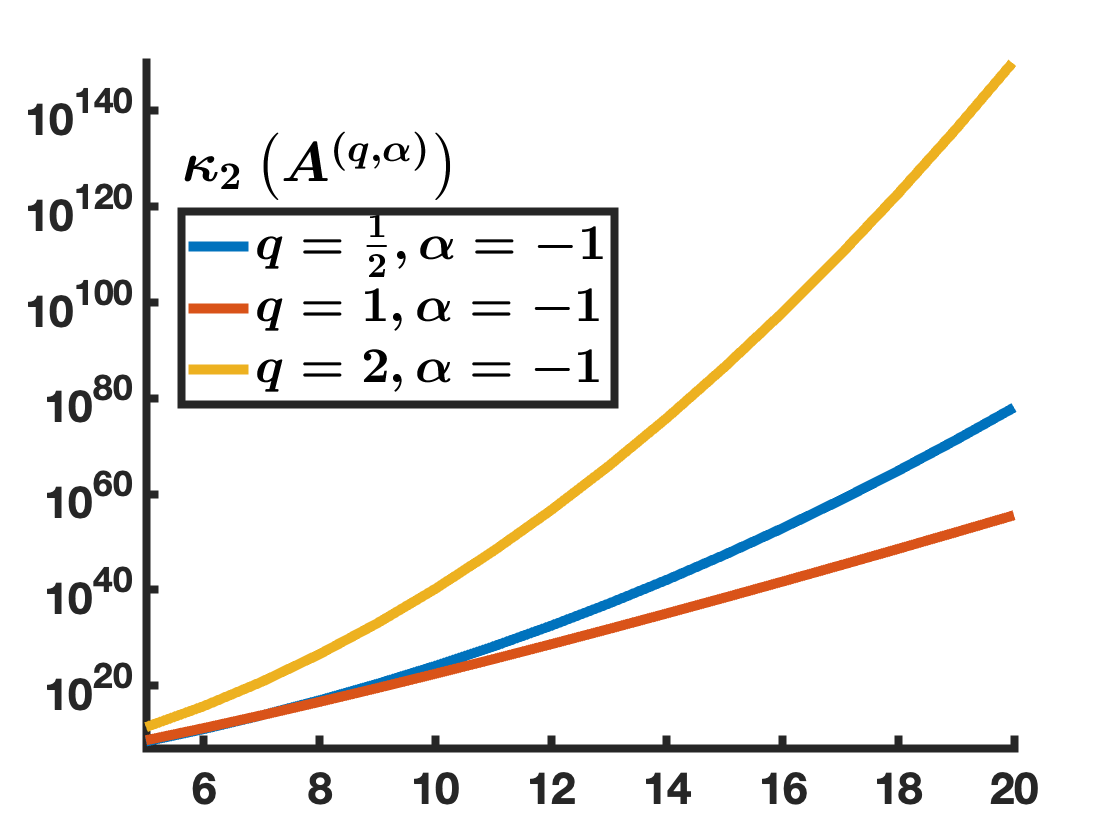}
\includegraphics[width=0.45\linewidth]{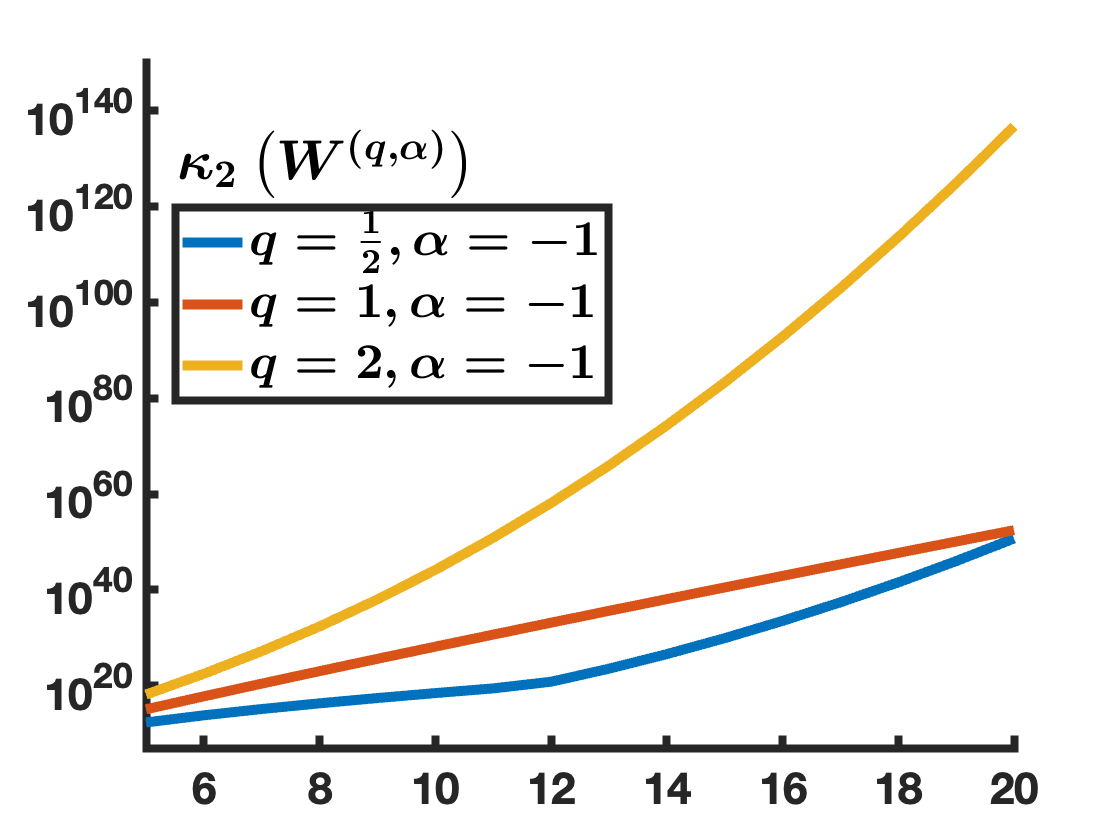}
\includegraphics[width=0.45\linewidth]{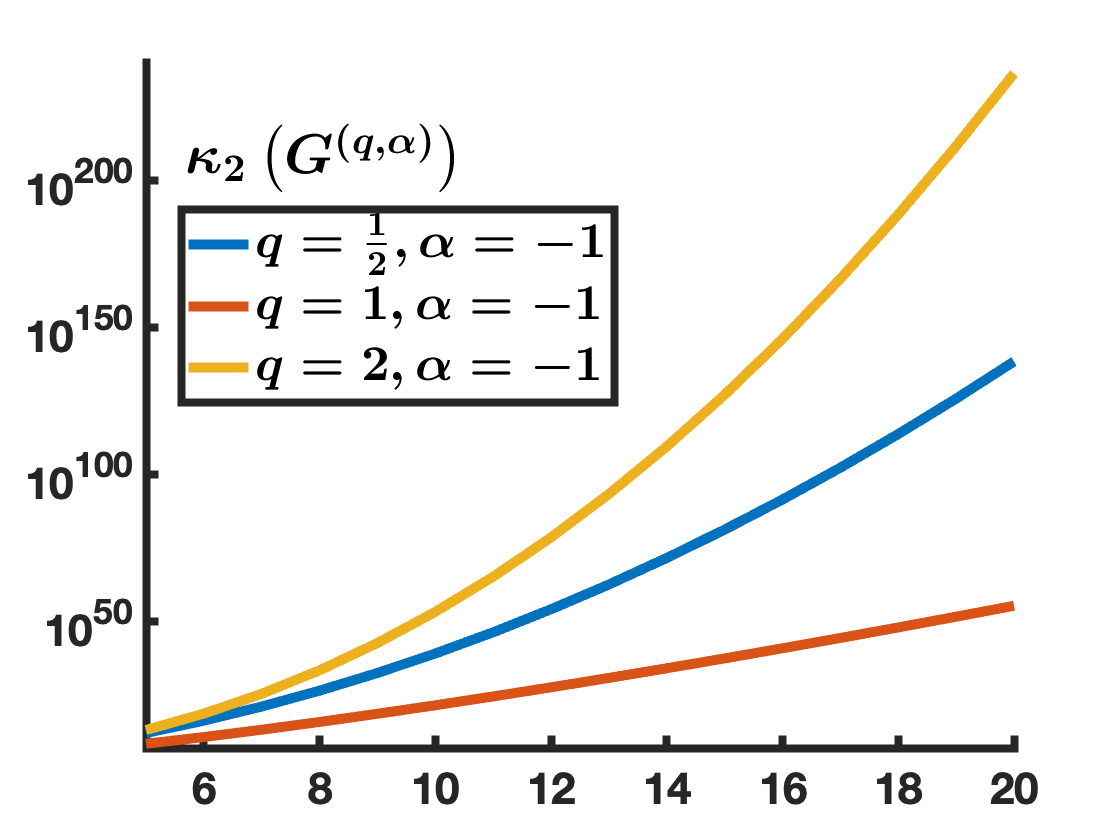}
\caption {\label{picture1} The 2-norm conditioning  of collocation matrices $A^{(q,\alpha)}$ \eqref{eq:change} with equidistant nodes $t_i=i/(n+1)$ for $i=1,\dots,n+1$, Wronskian matrices $W^{(q,\alpha)}$ \eqref{eq:changeW} for $x=50$ and Gramian matrices $G^{(q,\alpha)}$ \eqref{eq:GramqAbel} of $q$-Abel bases. \label{fig:kappa2}}
 \end{center}
\end{figure}

Nevertheless, for a nonsingular TP matrix $A$, as long as $BD(A)$ \eqref{eq:BDA} can be provided to high relative accuracy, the above mentioned algebraic problems can be solved preserving HRA. In order to do so, the functions \verb"TNSolve", \verb"TNEigenValues", \verb"TNSingularValues" and \verb"TNInverseExpand" (see  \cite{Marco3}), available in  Koev's software library \verb"TNTool" \cite{Koev3},  compute the solution of each algebraic problem, taking as input $BD(A)$. The function \verb"TNProduct"  is also  avaliable in the mentioned library. Let us recall that, given  $A=BD(F)$ and $B =BD(G)$ to high relative accuracy,  $\verb"TNProduct"(A,B)$ computes $ BD(FG)$ to high relative accuracy. The computational cost  of  \verb"TNSolve" and \verb"TNInverseExpand" is  $O(n^2)$, being $O(n^3)$ for the other functions.

For solving the different algebraic problems carried out in the numerical experimentation, it should be noted that the function \verb"TNProduct" has been used to compute the bidiagonal decompositions  \eqref{eq:BDA} necessary to achieve the solutions  to HRA. In Table~\ref{TableAlgorithms} the specific input arguments to obtain the different bidiagonal decompositions  are detailed, together with the necessary theoretical results.

\begin{table}[ht]\centering
\small

\begin{tabular}{@{}llll@{}}
\hline
Matrix & Parameters & Bidiagonal decomposition & Theoretical results  \\
\hline
$A^{(q,\alpha)}$  & $0<t_1<\cdots<t_{n+1}$ and $\alpha\leq0$  & $BD(A^{(q,\alpha)})=\verb"TNProd"(BD(V),BD(L^{(q,\alpha)})^T)$ & \eqref{eq:BDAbelq2}, \eqref{eq:BDV} and \eqref{eq:change} \\
$A^{(q,\alpha)}$ &  $0>t_1>\cdots>t_{n+1}$ and $\alpha\geq0$ & $BD(A_{J}^{(q,\alpha)})=\verb"TNProd"(BD(V_J),BD(L_J^{(q,\alpha)})^T)$ & \eqref{eq:BDAbelq2J}, \eqref{eq:BDVJ} and \eqref{eq:JWJ} \\

$W^{(q,\alpha)}$ &  $x\geq0$ and $\alpha\leq0$   & $BD(W^{(q,\alpha)})=\verb"TNProd"(BD(W),BD(L^{(q,\alpha)})^T)$ & \eqref{eq:BDAbelq2}, \eqref{eq:BDWmonomials} and \eqref{eq:changeW} \\

$W^{(q,\alpha)}$ &  $x\leq0$ and $\alpha\geq0$ & $BD(W_{J}^{(q,\alpha)})=\verb"TNProd"(BD(W_J), BD(L_J^{(q,\alpha)})^T)$ & \eqref{eq:BDAbelq2J}, \eqref{eq:BDWJmonomials} and \eqref{eq:changeWJ} \\
$G^{(q,\alpha)}$ & $\alpha<0$  & $BD(G^{(q,\alpha)})=$ & \eqref{eq:BDAbelq2}, \eqref{eq:BDAMBernsteinN} and \eqref{eq:GramqAbel} \\
& & $=\verb"TNProd"(\verb"TNProd"(BD(L^{(q,\alpha)}),BD(H)),BD(L^{(q,\alpha)})^T)$  & 
\end{tabular}

\caption{\label{TableAlgorithms} Algorithms applied to  obtain the proposed bidiagonal decompositions \eqref{eq:BDA} used in the numerical experimentation. }
\end{table}

In order to numerically try the goodness of the algorithms presented in this work, in each of the ensuing cases we have compared our proposal with the corresponding standard routine given in MATLAB/Octave. Additionally, the exact solution is taken to be the one provided by Wolfram Mathematica 13.3 with 200-digit arithmetic. Then, relative errors of each approximation are computed as $e:=||y-\tilde{y}||_2/||y||_2$, where $\tilde{y}$ is the approximated result and $y$ the one provided by Mathematica. Note that $||y||_2$ is the vector or matricial 2-norm, depending on the nature of $y$.

It is also worth noting that the experiments have been run at different matrix sizes $n$, to check its dependency, and with different values of the parameters $q$ and $\alpha$, including $q=1$, since in this case $q$-Abel polynomials are reduced to the classical Abel polynomials.

\textbf{Resolution of linear systems.} To begin with we address, first for negative $\alpha$, the collocation matrix $A^{(q,\alpha)}$ with $n+1$ equidistant nodes in increasing order $t_i=i/(n+1)$ for $i=1,\dots,n+1$, and the Wronskian matrix $W^{(q,\alpha)}$, for $x=50$. 
In second place, the positive $\alpha$ case is probed for the same systems but, for $A^{(q,\alpha)}$, with $n+1$ equidistant negative nodes in decreasing order $t_i=-i/(n+1)$, and for the Wronskian matrix $W^{(q,\alpha)}$, with $x=-20$. 

It is important to note that the system $W^{(q,\alpha)}y=b$, in the case of nonpositive $x$ and $\alpha\geq0$, is guaranteed to be solved to HRA as long as all the components of $b$ have the same sign; in the rest of the systems considered, $b$ with alternating signs is required to obtain HRA.

In all cases, the solutions of the systems $A^{(q,\alpha)}y=b$ and $W^{(q,\alpha)}y=b$ are determined both with the proposed bidiagonal decomposition (see Table \ref{TableAlgorithms}) as input to \verb"TNSolve" and with the standard $\setminus$ MATLAB command. 
Results are presented for $\alpha=-1$ in Table \ref{TableLinearSystems} and for $\alpha=1$ in Table \ref{TableLinearSystemsJ}, in both cases for $q=0.5,1,0.2$.
As can be seen in the relative errors depicted in Tables \ref{TableLinearSystems} and \ref{TableLinearSystemsJ}, for every value of $q$ and $n$ tested, the proposed method holds its accuracy. This is in contrast with the approximation obtained by the standard $\setminus$ command, which looses precision rapidly as $n$ increases.

\begin{table}[ht]\centering

\begin{tabular}{@{}cccccc@{}}
\hline
\multicolumn{2}{l}{} &  \multicolumn{2}{c}{\bf $A^{(q,\alpha)}y=b$ } & \multicolumn{2}{c}{\bf $W^{(q,\alpha)}y=b$ }  \\
\hline
$q$ & $n$  &  $ A^{(q,\alpha)} \setminus b$ & {\verb"TNSolve"$(BD(A^{(q,\alpha)}),b)$}  &   $W^{(q,\alpha)} \setminus b$ & {$\verb"TNSolve"(BD(W^{(q,\alpha)}),b)$}    \\
\hline
0.5 &  5 & $5.1e-09$ & $1.7e-16$ & $1.7e-15$ & $5.6e-17$\\
0.5 & 10 & $1.0e+00$ & $3.3e-16$ & $1.2e-10$ & $1.8e-16$\\
0.5 & 15 & $1.0e+00$ & $1.2e-15$ & $3.3e-7$ & $1.2e-15$\\
0.5 & 20 & $1.0e+00$ & $2.4e-16$ & $4.7e-02$ & $5.0e-16$\\
\hline
1 &  5 & $1.8e-11$ & $5.8e-16$ & $3.8e-16$ & $4.6e-17$\\
1 & 10 & $1.0e+00$ & $3.7e-16$ & $5.6e-13$ & $1.6e-16$\\
1 & 15 & $1.0e+00$ & $4.6e-16$ & $6.2e-10$ & $1.9e-16$\\
1 & 20 & $1.0e+00$ & $3.4e-16$ & $7.9e-07$ & $2.8e-16$\\
\hline
2 &  5 & $1.6e-10$ & $1.3e-16$ & $4.1e-15$ & $1.0e-16$\\
2 & 10 & $1.0e+00$ & $3.6e-16$ & $1.3e-11$ & $6.9e-17$\\
2 & 15 & $1.0e+00$ & $1.8e-16$ & $4.6e-02$ & $3.5e-16$\\
2 & 20 & $1.0e+00$ & $1.3e-15$ & $7.7e+15$ & $4.0e-16$\\
\hline
\end{tabular}

\caption{\label{TableLinearSystems} Relative errors of the approximations to the solution of the linear systems $A^{(q,\alpha)}y=b$ for $n+1$ equidistant nodes $t_i=i/(n+1)$ for $i=1,\dots,n+1$ and $W^{(q,\alpha)}y=b$ for $x=50$. The vector $b$ is composed by uniform-distributed random integers in $(0,1000]$, with alternating signs. In all cases, the parameter $\alpha=-1$.}
\end{table}

\begin{table}[ht]\centering

\begin{tabular}{@{}cccccc@{}}
\hline
\multicolumn{2}{l}{} &  \multicolumn{2}{c}{\bf $A^{(q,\alpha)}y=b$ } & \multicolumn{2}{c}{\bf $W^{(q,\alpha)}y=b$ }  \\
\hline
$q$ & $n$  &  $ A^{(q,\alpha)} \setminus b$ & {$J\cdot$\verb"TNSolve"$(BD(A_J^{(q,\alpha)}),b)$}  &   $W^{(q,\alpha)} \setminus b$ & {$J\cdot\verb"TNSolve"(BD(W_J^{(q,\alpha)}),Jb)$}    \\
\hline
0.5 &  5 & $5.1e-09$ & $1.4e-16$ & $2.5e-15$ & $1.6e-16$\\
0.5 & 10 & $1.0e+00$ & $2.3e-16$ & $8.4e-11$ & $1.8e-16$\\
0.5 & 15 & $1.0e+00$ & $1.1e-15$ & $1.7e-08$ & $1.2e-15$\\
0.5 & 20 & $1.0e+00$ & $5.8e-16$ & $2.0e-06$ & $3.4e-16$\\
\hline
1 &  5 & $1.9e-11$ & $4.1e-16$ & $9.8e-17$ & $2.0e-16$\\
1 & 10 & $1.0e+00$ & $2.3e-16$ & $4.1e-12$ & $2.6e-16$\\
1 & 15 & $1.0e+00$ & $5.3e-16$ & $1.3e-09$ & $2.2e-16$\\
1 & 20 & $1.0e+00$ & $4.6e-16$ & $5.6e-07$ & $4.8e-16$\\
\hline
2 &  5 & $1.6e-10$ & $8.3e-17$ & $2.2e-15$ & $1.5e-16$\\
2 & 10 & $1.0e+00$ & $2.6e-16$ & $2.3e-10$ & $1.7e-16$\\
2 & 15 & $1.0e+00$ & $1.3e-16$ & $2.8e+00$ & $2.2e-16$\\
2 & 20 & $1.0e+00$ & $1.9e-15$ & $4.8e+19$ & $4.2e-16$\\
\hline
\end{tabular}

\caption{\label{TableLinearSystemsJ} Relative errors of the approximations to the solution of the linear systems $A^{(q,\alpha)}y=b$ for $n+1$ equidistant nodes $t_i=-i/(n+1)$ for $i=1,\dots,n+1$ and $W^{(q,\alpha)}y=b$ for $x=-20$. The vector $b$ is composed by uniform-distributed random integers in $(0,1000]$, with alternating signs in the case $A^{(q,\alpha)}y=b$ and with equal signs for $W^{(q,\alpha)}y=b$. In all cases, the parameter $\alpha=1$.}
\end{table}

\textbf{Computation of eigenvalues and singular values.} In this case, since the Wronskian is triangular, i.e., its eigenvalues are exact, and the eigenvalues of the Gramian coincide with its singular values, the presented results only concern the collocation matrix $A^{(q,\alpha)}$. In this case the nodes are chosen to be $t_i=\text{log}(i+1)/\text{log}(n+2)$ for $i=1,\dots,n+1$, the parameter $\alpha=-10$ and again three values of $q$ are studied. As in the previous numerical experiment, the bidiagonal decomposition of $A^{(q,\alpha)}$ (see Table \ref{TableAlgorithms}) is used as an input to \verb"TNEigenValues" and \verb"TNSingularValues" routines, and compared with the standard MATLAB commands.

Relative errors of the smallest eigenvalues and singular values are shown in Table \ref{TableEigenvalues}. In this case it is clear that MATLAB commands \verb"eig" and \verb"svd" are not able to properly determine the smallest eigenvalue or singular value, even for small $n$, while the proposed bidiagonal decomposition approach succeeds in maintaining HRA at every matrix size tested.

\begin{table}[ht]\centering

\begin{tabular}{@{}cccccc@{}}
\hline
$q$ & $n$  &  \verb"eig"$(A^{(q,\alpha)})$ & {\verb"TNEigenValues"$(BD(A^{(q,\alpha)}))$}  &   \verb"svd"$(A^{(q,\alpha)})$ & {\verb"TNSingValues"$(BD(A^{(q,\alpha)}))$}    \\
\hline
0.5 &  5 & $3.9e-04$ & $4.1e-16$ & $7.1e-04$ & $2.7e-16$\\
0.5 & 10 & $3.3e+20$ & $9.1e-16$ & $3.5e+13$ & $1.9e-15$\\
0.5 & 15 & $1.2e+37$ & $2.7e-15$ & $5.8e+36$ & $2.8e-15$\\
0.5 & 20 & $4.2e+68$ & $1.0e-16$ & $6.4e+69$ & $1.8e-15$\\
\hline
1 &  5 & $2.1e-05$ & $3.8e-16$ & $8.0e-03$ & $1.0e-15$\\
1 & 10 & $2.1e+15$ & $3.9e-16$ & $2.8e+09$ & $7.3e-17$\\
1 & 15 & $5.8e+23$ & $2.3e-15$ & $4.2e+27$ & $2.7e-15$\\ 
1 & 20 & $2.0e+66$ & $4.6e-15$ & $1.5e+46$ & $7.0e-16$\\
\hline
2 &  5 & $6.8e-05$ & $1.1e-15$ & $3.4e+00$ & $6.8e-16$\\
2 & 10 & $3.6e+27$ & $1.4e-16$ & $6.1e+16$ & $1.7e-16$\\
2 & 15 & $1.3e+76$ & $2.5e-15$ & $1.0e+00$ & $8.8e-17$\\
2 & 20 & $4.7e+49$ & $3.6e-15$ & $1.0e+00$ & $5.0e-17$\\
\hline
\end{tabular}

\caption{\label{TableEigenvalues} Relative errors of the approximations to the smallest eigenvalue and singular value of $A^{(q,\alpha)}$ for $n+1$ nodes $t_i=\text{log}(i+1)/\text{log}(n+2)$ for $i=1,\dots,n+1$ and $\alpha=-10$.}
\end{table}

\textbf{Computation of inverses.} Finally, we compare the computation of the inverses of both the collocation matrix $A^{(q,\alpha)}$, for nodes $t_i=i^2/(n+1)^2$ with $i=1,\dots,n+1$, and of $G^{(q,\alpha)}$. The same three values for $q$ are analyzed, and $\alpha=-0.1$ is chosen in this case. The comparison between the relative errors of the standard MATLAB routine \verb"inv" and our bidiagonal decompositions (see Table \ref{TableAlgorithms}) together with \verb"TNInverseExpand" is shown in Table \ref{TableInverse}. Again, the theoretical results are heavily supported by the numerical evidence, since contrary to the standard MATLAB approach, the bidiagonal factorization procedure determines the inverses to HRA in all studied cases.

\begin{table}[ht]\centering

\begin{tabular}{@{}cccccc@{}}
\hline
$q$ & $n$  &  \verb"inv"$(A^{(q,\alpha)})$ & {\verb"TNInvExp"$(BD(A^{(q,\alpha)}))$}  &   \verb"inv"$(G^{(q,\alpha)})$ & {\verb"TNInvExp"$(BD(G^{(q,\alpha)}))$}    \\
\hline
0.5 &  5 & $1.3e-13$ & $1.3e-16$ & $1.3e-07$ & $1.3e-16$\\
0.5 & 10 & $1.5e-02$ & $8.4e-16$ & $1.0e+00$ & $6.0e-17$\\
0.5 & 15 & $1.0e+00$ & $1.0e-15$ & $1.0e+00$ & $1.1e-16$\\
0.5 & 20 & $1.0e+00$ & $2.6e-16$ & $1.0e+00$ & $2.4e-16$\\
\hline
1 &  5 & $1.1e-13$ & $1.7e-16$ & $1.5e-09$ & $1.4e-16$\\
1 & 10 & $3.2e-07$ & $5.6e-16$ & $1.0e+00$ & $3.4e-16$\\
1 & 15 & $9.0e-01$ & $8.2e-16$ & $1.0e+00$ & $7.6e-16$\\ 
1 & 20 & $1.0e+00$ & $1.3e-15$ & $1.0e+00$ & $3.0e-15$\\
\hline
2 &  5 & $1.0e-13$ & $1.3e-16$ & $9.8e-10$ & $2.0e-16$\\
2 & 10 & $4.0e-05$ & $3.3e-16$ & $1.0e+00$ & $1.0e-16$\\
2 & 15 & $1.0e+00$ & $7.5e-16$ & $1.0e+00$ & $5.9e-16$\\
2 & 20 & $1.0e+00$ & $7.1e-16$ & $1.0e+00$ & $6.2e-16$\\
\hline
\end{tabular}

\caption{\label{TableInverse} Relative errors of the approximations to the inverses of $A^{(q,\alpha)}$ for $n+1$ nodes $t_i=i^2/(n+1)^2$ for $i=1,\dots,n+1$ and of $G^{(q,\alpha)}$, for
$\alpha=-0.1$ in both cases.}
\end{table}

\section{Conclusions and final remarks} \label{conclusions}
Abel polynomials, as was mentioned at the beginning of this work, can be used 
to obtain any polynomial sequence of binomial type---a known result coming from 
umbral calculus. In this paper we have considered the $q$ generalization of this 
polynomials: the $q$-Abel polynomials. When addressing linear algebraic problems
involving collocation, Wronskian and Gram matrices of these polynomials, one may
face very ill-conditioned matrices, as has been shown in Section 
\ref{sec:experimentos}. As a consequence, traditional numerical methods are 
usually not capable of delivering an accurate solution.

Leaning on the theory of totally positive matrices, the bidiagonal factorization 
of the considered matrices has been obtained, allowing us to analyze in which cases 
the total positivity condition is fulfilled. Moreover, due to the found expressions 
of the multipliers and pivots of the Neville elimination process, we have been able 
to provide algorithms that solve several algebraic problems with high relative 
accuracy.

 \vspace{0.4 cm}
 
\noindent {\bf Acknowledgements} We are thankful for the helpful comments and suggestions of the anonymous referee, which have improved this paper.
 \vspace{0.4 cm}

\noindent {\bf Funding} This work was partially supported by Spanish research grants PID2022-138569NB-I00  (MCI/AEI)  and RED2022-134176-T (MCI/AEI) and by Gobierno de Arag\'{o}n (E41$\_$23R, S60$\_$23R).
 \vspace{0.4 cm}
 
\noindent {\bf Data Availability} The source code used to run the numerical experiments is available upon request.

\section*{Declarations}

\noindent {\bf Conflict of interest} This study does not have any conflicts to disclose.

\bibliography{bibliography}{}

\begin{thebibliography}{10}
\providecommand{\url}[1]{{#1}}
\providecommand{\urlprefix}{URL }
\expandafter\ifx\csname urlstyle\endcsname\relax
  \providecommand{\doi}[1]{DOI~\discretionary{}{}{}#1}\else
  \providecommand{\doi}{DOI~\discretionary{}{}{}\begingroup
  \urlstyle{rm}\Url}\fi

\bibitem{Abe1997}
Abe, S.: A note on the $q$-deformation-theoretic aspect of the generalized
  entropies in nonextensive physics.
\newblock Physics Letters A \textbf{224}(6), 326--330 (1997)

\bibitem{Ando}
Ando, T.: Totally positive matrices.
\newblock Linear Algebra and its Applications \textbf{90}, 165--219 (1987)

\bibitem{Koev1}
Demmel, J., Koev, P.: The accurate and efficient solution of a totally positive
  generalized {V}andermonde linear system.
\newblock SIAM Journal on Matrix Analysis and Applications \textbf{27}(1),
  142--152 (2005)

\bibitem{Ernst2000Thesis}
Ernst, T.: The history of $q$-calculus and a new method (2000)

\bibitem{Ernst2006}
Ernst, T.: $q$-{B}ernoulli and $q$-{E}uler polynomials, an umbral approach.
\newblock International Journal of Difference Equations \textbf{1}(1) (2006)

\bibitem{Ernst2013}
Ernst, T.: An umbral approach to find $q$-analogues of matrix formulas.
\newblock Linear Algebra and its Applications \textbf{439}(4), 1167--1182
  (2013).
\newblock 17th Conference of the International Linear Algebra Society,
  Braunschweig, Germany, August 2011

\bibitem{Fallat}
Fallat, S.M., Johnson, C.R.: Totally Nonnegative Matrices.
\newblock Princeton University Press (2011)

\bibitem{Finck}
Finck, T., Heinig, G., Rost, K.: An inversion formula and fast algorithms for
  {C}auchy-{V}andermonde matrices.
\newblock Linear Algebra and its Applications \textbf{183}, 179--191 (1993)

\bibitem{Gasca1}
Gasca, M., Pe\~na, J.M.: Total positivity and {N}eville elimination.
\newblock Linear Algebra and its Applications \textbf{165}, 25--44 (1992)

\bibitem{Gasca2}
Gasca, M., Pe\~na, J.M.: A matricial description of {N}eville elimination with
  applications to total positivity.
\newblock Linear Algebra and its Applications \textbf{202}, 33--53 (1994)

\bibitem{Gasca3}
Gasca, M., Pe\~na, J.M.: Total Positivity and Its Applications, chap. On
  Factorizations of Totally Positive Matrices, pp. 109--130.
\newblock Springer Netherlands, Dordrecht (1996)

\bibitem{Jarad2013}
Jarad, F., Abdeljawad, T., Baleanu, D.: Stability of $q$-fractional
  non-autonomous systems.
\newblock Nonlinear Analysis-Real World Applications \textbf{14}(1), 780--784
  (2013)

\bibitem{Johnson}
Johnson, W.P.: $q$-extensions of identities of {A}bel-{R}othe type.
\newblock Discrete Mathematics \textbf{159}(1), 161--177 (1996)

\bibitem{Koev3}
Koev, P.: {TNT}ool: software package for performing virtually all matrix
  computations with nonsingular totally nonnegative matrices to high relative
  accuracy.
\newblock {http://math.mit.edu/$\sim$plamen/software/TNTool.html}

\bibitem{Koev4}
Koev, P.: Accurate eigenvalues and {SVD}s of totally nonnegative matrices.
\newblock SIAM Journal on Matrix Analysis and Applications \textbf{27}(1),
  1--23 (2005)

\bibitem{Koev2}
Koev, P.: Accurate computations with totally nonnegative matrices.
\newblock SIAM Journal on Matrix Analysis and Applications \textbf{29}(3),
  731--751 (2007)

\bibitem{Lavagno2000}
Lavagno, A., Swamy, P.N.: Thermostatistics of a $q$-deformed boson gas.
\newblock Phys. Rev. E \textbf{61}, 1218--1226 (2000)

\bibitem{Lavagno2002}
Lavagno, A., Swamy, P.N.: Generalized thermodynamics of $q$-deformed bosons and
  fermions.
\newblock Phys. Rev. E \textbf{65}, 036101 (2002)

\bibitem{MPRRACSAM}
Mainar, E., Pe\~na, J., Rubio, B.: Accurate computations with {G}ram and
  {W}ronskian matrices of geometric and {P}oisson bases.
\newblock Revista de la Real Academia de Ciencias Exactas, F\'isicas y
  Naturales. Serie A. Matem\'aticas \textbf{116} (2022)

\bibitem{MPRAdvances}
Mainar, E., Pe\~na, J., Rubio, B.: Accurate computations with matrices related
  to bases $ \{t^{i}e^{\lambda t} \}$.
\newblock Adv. Comput. Math. \textbf{48}(4) (2022)

\bibitem{MPRNA}
Mainar, E., Pe\~na, J., Rubio, B.: Total positivity and accurate computations
  with {G}ram matrices of {B}ernstein bases.
\newblock Numerical Algorithms \textbf{91}, 1--19 (2022)

\bibitem{MPRNewton}
Mainar, E., Pe\~na, J., Rubio, B.: High relative accuracy through {N}ewton
  bases.
\newblock Numerical Algorithms pp. 1--26 (2023)

\bibitem{MPRNLA}
Mainar, E., Pe\~na, J.M., Rubio, B.: Accurate bidiagonal decomposition of
  collocation matrices of weighted $\phi$-transformed systems.
\newblock Numerical Linear Algebra with Applications \textbf{27}(3), e2295
  (2020)

\bibitem{MPRCalcolo}
Mainar, E., Pe\~na, J.M., Rubio, B.: Accurate and efficient computations with
  {W}ronskian matrices of {B}ernstein and related bases.
\newblock Numerical Linear Algebra with Applications \textbf{29}(3), e2423
  (2022)

\bibitem{MPRJacobi}
Mainar, E., Pena, J.M., Rubio, B.: Accurate computations with collocation and
  {W}ronskian matrices of {J}acobi polynomials.
\newblock Journal of Scientific Computing \textbf{87}(3) (2021)

\bibitem{Marco3}
Marco, A., Mart\'inez, J.J.: Accurate computation of the {M}oore-{P}enrose
  inverse of strictly totally positive matrices.
\newblock Journal of Computational and Applied Mathematics \textbf{350},
  299--308 (2019)

\bibitem{MMP}
Marco, A., Mart\'inez, J.J., Pe\~na, J.M.: Accurate bidiagonal decomposition of
  totally positive {C}auchy-{V}andermonde matrices and applications.
\newblock Linear Algebra and its Applications \textbf{517}, 63--84 (2017)

\bibitem{Oruc}
Oruç, H., Phillips, G.M.: Explicit factorization of the {V}andermonde matrix.
\newblock Linear Algebra and its Applications \textbf{315}(1), 113--123 (2000)

\bibitem{Pinkus}
Pinkus, A.: Totally Positive Matrices.
\newblock Cambridge Tracts in Mathematics. Cambridge University Press (2009)

\bibitem{RST}
Rota, G.C., Shen, J., Taylor, B.D.: All polynomials of binomial type are
  represented by {A}bel polynomials.
\newblock Annali della Scuola Normale Superiore di Pisa - Classe di Scienze
  \textbf{25}(3-4), 731--738 (1997)

\bibitem{RT}
Rota, G.C., Taylor, B.D.: The classical umbral calculus.
\newblock SIAM Journal on Mathematical Analysis \textbf{25}(2), 694--711 (1994)

\end{thebibliography}

\end{document}